\newcommand{\R}{\mathbb{R}}
\newcommand{\Z}{\mathbb{Z}}  
\newcommand\hz[1]{#1\hphantom{0}}
\newcommand\HM{\hphantom{-}}
\newcommand{\fset}{{\mathcal{F}}}
\newcommand{\size}{\operatorname{size}} 
\newcommand{\cone}{\operatorname{cone}}
\theoremstyle{plain}
\newtheorem{theorem}{Theorem}[section]
\newtheorem{proposition}[theorem]{Proposition}
\theoremstyle{definition}
\newtheorem{alg}[theorem]{Algorithm}
\newtheorem{definition}[theorem]{Definition}
\newtheorem{example}[theorem]{Example}
\newtheorem{examples}[theorem]{Examples}%
\theoremstyle{remark}
\newtheorem{observations}[theorem]{Observations}
\title{Small \emph{f}-vectors of {3}-spheres and of {4}-polytopes}
\author{Philip Brinkmann%
\footnote{Funded by DFG through the RTG \emph{Methods for Discrete Structures}.}\\ 
Institut f\"ur Mathematik, FU Berlin\\Arnimallee 2\\14195 Berlin, Germany\\
\url{philip.brinkmann@fu-berlin.de}
\and
G\"unter M.~Ziegler%
\setcounter{footnote}{6}%
\footnote{Supported by DFG via the Collaborative Research Center TRR~109 ``Discretization in Geometry and Dynamics''.}\\
Institut f\"ur Mathematik, FU Berlin\\Arnimallee 2\\14195 Berlin, Germany\\
\url{ziegler@math.fu-berlin.de}}
\date{{\small October 4, 2016}}
\begin{document}
    
    \maketitle
    
\begin{abstract}
	We present a new algorithmic approach that
	can be used to determine whether a given
	quadruple $(f_0,f_1,f_2,f_3)$ is 
	the $f$-vector of any convex $4$-dimensional polytope. 
	By implementing this approach, we classify the $f$-vectors  
	of $4$-polytopes in the range $f_0+f_3\le22$. 
	
	In particular, we thus prove that there are $f$-vectors of  
	cellular $3$-spheres with the intersection property
	that are not $f$-vectors of any convex $4$-polytopes,
	thus answering a question that may be traced back to the works of
	Steinitz (1906/1922). In the
	range $f_0+f_3\le22$, there are exactly three such $f$-vectors with $f_0\le f_3$,
	namely $(10,32,33,11)$, $(10,33,35,12)$, and $(11,35,35,11)$.
\end{abstract}

\section{Introduction} 

In 1906, Ernst Steinitz \cite{Steinitz} proved a remarkably simple and complete result:
The set of all $f$-vectors of $3$-polytopes is given by
all the integer points in a $2$-dimensional polyhedral cone,
whose boundary is given by the extremal cases of ($f$-vectors of)
simple and of simplicial polytopes:
\[
\fset(\mathcal{P}^3) = \{(f_0,f_1,f_2)\in\Z^3: f_0-f_1+f_2=2,\ f_2\le2f_0-4,\ f_0\le2f_2-4\}.
\]
Steinitz's later work \cite{SteinitzThm,SteinitzVorlesung} from 1922/1934 
implies that the same characterization
is valid also for the $f$-vectors of more general objects 
such as regular cellular 2-spheres with the intersection property or
of interval-connected Eulerian lattices of length $4$ (as described below).
 
{The \emph{f}-vectors of 4-polytopes}, however, provide a much greater
challenge.
Grünbaum wrote in his 1967 book:  
\begin{compactitem}
	\item[]
	“\emph{It would be rather interesting to find a characterization of those lattice
		points in~$\R^4$ which are the $f$-vectors of $4$-polytopes. This goal seems rather
		distant, however, in view of our inability to solve even such a small part 
		of the problem as the lower bound conjecture for $4$-polytopes.}” 
		(Grünbaum \cite[p.~191]{Gruenbaum})
\end{compactitem}
The lower bound conjecture was solved by Barnette in 1971/73  \cite{Bar,Bar1}, 
but the problem to characterize $\fset(\mathcal{P}^4)$ remains wide open.
Grünbaum himself initiated and started in \cite[Sect.~10.4]{Gruenbaum} 
a study of the $2$-dimensional coordinate
projections of the $3$-dimensional set $\fset(\mathcal{P}^4)\subset\R^4$,
which was eventually completed by 
Barnette and Reay \cite{barnette73:_projec} and Barnette \cite{barnette74:_e_s}.
A typical result in the series says that a pair $(f_i,f_j)$ occurs in an 
$f$-vector if it satisfies some simple linear or quadratic upper/lower bound inequalities,
and if it is not one of finitely-many “small” exceptions.
For example, according to \cite[Thm.\,10.4.1]{Gruenbaum} 
a pair $(f_0,f_3)$ occurs for a $4$-polytope if
and only if the upper bound inequalities
$f_3\le\frac12f_0(f_0-3)$ and
$f_0\le\frac12f_3(f_3-3)$ are satisfied, with no exceptions in this case.

Any characterization of (a projection of) the set of $f$-vectors $\fset(\mathcal{P}^4)$
contains a characterization of the extremal cases
and a solution of the corresponding extremal problems.
Some of these are visible in $2$-dimensional coordinate projections.
For example, the $(f_0,f_3)$-classification quoted above contains the upper bound theorem
for $4$-polytopes. 

As a complete determination of $\fset(\mathcal{P}^4)$ seems out of reach, 
a natural approximation to the problem asks for a characterization of the
closed convex cones with
apex at the $f$-vector $f(\Delta_4)=(5,10,10,5)$ of the $4$-simplex that are generated by the $f$-vectors of $4$-polytopes 
resp.\ of $3$-spheres,
\[
  \cone(\fset(\mathcal{P}^4)) \subseteq
  \cone(\fset(\mathcal{S}^3)).
\]
Equivalently, one asks for the linear inequalities that are valid for all $f$-vectors and
tight at $f(\Delta_4)=(5,10,10,5)$. 
For example, the inequalities $f_1\ge2f_0$ and $f_2\ge2f_3$ are of this form,
satisfied with equality by simple resp.\ simplicial $4$-polytopes. 
Thus, in particular the $f$-vectors of simple and simplicial $4$-polytopes 
are extremal in the coordinate projections to $(f_0,f_1)$ resp.\ $(f_2,f_3)$.

It was noted in Ziegler \cite{Z82} that a key parameter of an $f$-vector is the \emph{fatness} 
\[
F(f_0,f_1,f_2,f_3):=\frac{f_1+f_2-20}{f_0+f_3-10}.
\]
Though fatness is not defined for the ($f$-vector of a) simplex,
every lower or upper bound on fatness 
corresponds to a linear inequality that is tight at the simplex.
In \cite{Z82} the second author also identified the
two key problems that prevent us up to now from determining 
$\cone(\fset(\mathcal{P}^4))$ or
$\cone(\fset(\mathcal{S}^3))$:
\begin{compactitem}
	\item \emph{Does fatness have an upper bound for $4$-polytopes}?\\
	(It does not for $3$-spheres, as proved by Eppstein, Kuperberg \& Ziegler \cite{Z80}.)
	\item \emph{Is the fatness lower bound $F\ge2.5$ valid for all $3$-spheres}?\\
	(For $4$-polytopes it follows from $g_2^{tor}\ge0$, see Kalai \cite{kalai87:_rigid_i}.)%
\end{compactitem}
These are extremal problems on 
$\fset(\mathcal{P}^4)$ resp.\  
$\fset(\mathcal{S}^3)$ 
that cannot be solved by looking at the projections to only two coordinates.
However, below we will suggest a different projection which displays fatness very clearly.
\smallskip
 
{In this paper} we are not directly dealing with the asymptotic questions.
Rather we classify the $f$-vectors of ``small'' polytopes, 
and from this derive new insights into what happens asymptotically.
For this, we redefine ``small'' by measuring the \emph{size} of an $f$-vector by
\[
   \size(f_0,f_1,f_2,f_3) := f_0 + f_3 - 10.
\] 
This is a linear quantity that is $\size(5,10,10,5)=0$ for the $f$-vector of the $4$-simplex.

For the classification {we have developed a new algorithmic approach},
in order to determine for any given reasonably small $(f_0,f_1,f_2,f_3)$,
whether there is a $4$-polytope with this $f$-vector.

{We have implemented the algorithm} and achieved a complete classification of the
$f$-vectors of size up to~$12$. 
That is, for every vector $(f_0,f_1,f_2,f_3)$ with $f_0+f_2\le22$ that satisfies the known
necessary conditions on $f$-vectors of $4$-polytopes, we have either constructed
a $3$-sphere or $4$-polytope with this $f$-vector, or proved that none exists.
The results are detailed in Sections~\ref{sec:classification} and \ref{sec:examples}.
As a main consequence of the enumeration, we obtain that the difference between
spheres and polytopes is so substantial that it appears even at the level of $f$-vectors:

\begin{theorem}\label{thm:f_differ}
		The set of $f$-vectors of $4$-polytopes 
		is a strict subset of the set of $f$-vectors of strongly regular $3$-spheres:
	\[
	\fset(\mathcal{P}^4)\subsetneqq \fset(\mathcal{S}^3).
	\]
	Indeed, the sets differ in exactly five such $f$-vectors of $\size(P)=f_0+f_3-10\le12$, namely
	\begin{compactitem}%
		\item of size $11$: $(10,32,33,11)$, $(11,33,32,10)$, and  			 
		\item of size $12$: $(10,33,35,12)$, $(12,35,33,10)$, 
		$(11,35,35,11)$.   
	\end{compactitem}
\end{theorem}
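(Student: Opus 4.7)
My plan is to reduce the theorem to a finite case analysis and then attack it with the algorithm developed in the paper, exploiting polar duality to cut the work in half.

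First, I would fix the search region. Size $\size(f)\le12$ is a linear constraint, and combined with the Euler relation $f_0-f_1+f_2-f_3=0$, Grünbaum's $(f_0,f_3)$-projection bounds $f_3\le\tfrac12f_0(f_0-3)$ and $f_0\le\tfrac12f_3(f_3-3)$, the lower-bound inequalities $f_1\ge 2f_0$ and $f_2\ge 2f_3$ (tight on simple resp.\ simplicial polytopes), and the upper-bound theorem, this leaves only finitely many quadruples $(f_0,f_1,f_2,f_3)\in\Z^4$ to be considered. Enumerating them is routine. Polar duality acts on $f$-vectors by $(f_0,f_1,f_2,f_3)\mapsto(f_3,f_2,f_1,f_0)$ and sends $\fset(\mathcal{P}^4)$ to itself and $\fset(\mathcal{S}^3)$ to itself, so it suffices to decide realizability for candidates with $f_0\le f_3$; the reversed pairs $(11,33,32,10)$ and $(12,35,33,10)$ in the statement then come for free.

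Next, I would apply the decision algorithm of this paper to each remaining candidate to resolve polytopality. For most candidates the outcome is positive: a concrete 4-polytope with the target $f$-vector is produced, either by the algorithm or by citing one of the standard constructions collected in Section~\ref{sec:examples}. For the three exceptional candidates $(10,32,33,11)$, $(10,33,35,12)$ and $(11,35,35,11)$, the algorithm has to certify that no realization exists; this amounts to an exhaustive, but provably complete, enumeration of all combinatorial types of strongly regular 3-spheres with that $f$-vector followed by a Steinitz-type check showing that none admits a geometric 4-polytope realization. This is the main obstacle: one must design the enumeration so that it is both small enough to run to completion and certifiably exhaustive, and one must verify non-realizability for each sphere produced. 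Both points are precisely what the new algorithmic framework (Sections~\ref{sec:classification}) is engineered to handle.

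Finally, to establish that the three exceptional $f$-vectors do lie in $\fset(\mathcal{S}^3)$, I would exhibit an explicit strongly regular cellular 3-sphere for each one. These spheres fall out as a by-product of the enumeration step above, since the algorithm constructs candidate spheres before testing them for polytopality; it then suffices to record one witness for each of the three $f$-vectors, verify combinatorially that it is a strongly regular 3-sphere with the intersection property, and confirm its $f$-vector. Combining the positive constructions, the three negative certificates, their duals, and polar duality yields the claimed strict inclusion together with the complete list of five differing $f$-vectors in the range $\size(f)\le12$.
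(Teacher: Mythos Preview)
Your proposal is correct and follows essentially the same strategy as the paper: bound the candidate $f$-vectors to a finite list, exploit duality, run Algorithm~\ref{algorithm} on the survivors, construct polytopes where possible, and certify non-realizability for the remaining spheres. The paper's execution differs only in tactical refinements---it works with flag vectors $(f_0,f_1,f_2,f_3;f_{03})$ and the sharper Bayer/Ling/Brinkmann inequalities to prune harder, disposes of the cases $f_0\le8$ or $f_3\le8$ via the Altshuler--Steinberg classification, ticks off many vectors using the H\"oppner and Paffenholz--Werner polytope libraries, and replaces your ``Steinitz-type check'' with the specific oriented-matroid machinery (partial chirotopes and biquadratic final polynomials) illustrated in Proposition~\ref{prop:10_32_33_11_a_np}.
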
 

For simplicial spheres, the question whether all $f$-vectors of $(d-1)$-spheres 
also occur for $d$-polytopes---in view of the $g$-Theorem for polytopes---is equivalent to the $g$-conjecture for spheres. 
The answer is known to be “yes” for $d\le5$,
but the $g$-conjecture for sphere remains open for larger~$d$.
However, already in 1971, at the end of the paper in which he introduced the $g$-conjecture, 
McMullen voiced strong doubts: 
\begin{compactitem}
	\item[] 
	“\emph{in every case in which the [$g$-]conjecture is known to be true, it also holds
		for the corresponding triangulated spheres. 
		\emph{(\ldots)}  
		However,
		there are fundamental differences between
		triangulated $(d-1)$-spheres and boundary complexes of simplicial $d$-polytopes.
		\emph{(\ldots)} 
		We should therefore, perhaps, be wary of extending the conjecture to triangulated spheres.}”
	(McMullen \cite[p.~569]{McM1})
\end{compactitem}

Our algorithm works in three steps, proceeding from combinatorial models via 
topological models to polytopes.
It starts with an enumeration of the graphs that could be
compatible with the given $f$-vector. It then looks at the possible combinatorial types of
facets, and enumerates their combinations into an entirely combinatorial model of
polytopes, namely \emph{interval-connected Eulerian lattices} of length~$5$. 
This new model will be described in Section \ref{sec:objects}, where we also prove that
every such object corresponds to a regular cell-decomposition of a closed $3$-manifold
with the intersection property (Proposition~\ref{prop:Eulerian5}).
Thus the combinatorial types of 
regular cell-decompositions of the $3$-sphere with the intersection property
(which we simply refer to as ``$3$-spheres,'') form a subset of these Eulerian lattices.
The class of combinatorial types of convex $4$-polytopes is still more restrictive,
as became clear, for example, in the revision and correction of 
Brückner's \cite{Brueckner} work by Grünbaum \& Sreedharan \cite{Polytopes8vs}: Not every
diagram, and thus not every sphere, does correspond to a convex polytope.
 
In our search range of $\size(f)\le12$, all $f$-vectors of
Eulerian lattices also appear as $f$-vectors of spheres. That is, while
\[
\#\,\{f\in \fset(\mathcal{S}^3) \setminus \fset(\mathcal{P}^4): \size(f)\le12\} = 5
\]
we have
\[
\#\,\{f\in \fset(\mathcal{L}^5) \setminus \fset(\mathcal{S}^3): \size(f)\le12\} = 0.
\]
So it may be that $\fset(\mathcal{L}^5)=\fset(\mathcal{S}^3)$, but the computations
for $\size(f)\le12$ should not be counted as strong evidence, as indeed we did
not encounter \emph{any} manifolds that are not spheres in this range.
Also, very natural higher-dimensional versions of $\fset(\mathcal{L}^5)=\fset(\mathcal{S}^3)$ turn out to be false.  
For example, simplicial $5$-manifolds with negative $g_3$ appear in the
enumerations of Lutz \cite[pp.~56-58]{Lutz-diss}. 
						
In Figure~\ref{fig:1} we evaluate our classification results by looking 
at the $f$-vector set $\fset(\mathcal{P}^4)$
in a particular projection, which is not a coordinate projection,
and which has the virtue to show size (as first coordinate)
and fatness (as ``slope + 2'') directly. 

\begin{figure}[p]\label{fig:1}
\begin{center}
\begin{tikzpicture}[scale=.9,>=stealth']
    \draw[->] (0,0) -- coordinate (x axis mid) (9,0);
    \draw[->] (0,0) -- coordinate (y axis mid)(0,16);
    \foreach \t in {2,4,6,8,10,12,14,16,18,20}{
    	\edef\x{\t}
        \draw [](4*\x mm,1pt) -- (4*\x mm,-3pt)
            node[anchor=north] {$\x$};
    }
    \foreach \t in {2,4,6,8,10,12,14,16,18,20,22,24,26,28,30,32,34,36}{
    	\edef\y{\t}
        \draw (1pt,4*\y mm) -- (-3pt,4*\y mm) node[anchor=east] {$\y$};
    }
    \node[below=.5cm, name=X] at (x axis mid) {\hspace{60mm}$\size=f_0+f_3-10$};
    \node[rotate=90,above=1cm] at (y axis mid) {$f_1+f_2-20-2\cdot\size$};

    \draw[thick] (0,0)--(8,4) node[below right] {$F\geq\frac{5}{2}$};
    \draw[thick,dashed] (0,0)--(8,16) node[above left] {$F=4$};
    \draw[very thick,dotted] (0.4*12.5,0)--(0.4*12.5,16) node[above left] {$\size\leq 12$};
	    
    \foreach \x/\y/\beschriftung in {0/0/{}, 2/2/{}, 3/3/{}, 4/2/{}, 4/4/{}, 4/6/{},
    								 5/3/{}, 5/5/{}, 5/7/{}, 6/4/{}, 6/6/{}, 6/8/{}, 6/10/{},
    								 7/5/{}, 7/7/{}, 7/9/{}, 7/11/{}, 8/4/{}, 8/6/{}, 8/8/{},8/10/{},8/12/{},8/14/{},
    								 9/5/{}, 9/7/{}, 9/9/{}, 9/11/{}, 9/13/{}, 9/15/{},
    								 10/6/{}, 10/8/{}, 10/8/{}, 10/10/{}, 10/12/{}, 10/12/{}, 10/14/{}, 10/16/{},
    								 10/20/{$\Delta_2(4)$},
    								 11/7/{}, 11/9/{}, 11/11/{}, 11/13/{}, 11/15/{}, 11/17/{},
    								 12/6/{}, 12/8/{}, 12/10/{}, 12/12/{}, 12/14/{}, 12/16/{}, 12/18/{},
    								 13/7/{}, 13/9/{}, 13/11/{}, 13/13/{}, 13/15/{}, 13/17/{}, 13/19/{}, 13/21/{},
    								 14/8/{}, 14/10/{}, 14/12/{}, 14/14/{}, 14/16/{}, 14/18/{}, 14/20/{}, 14/22/{},
    								 14/24/{}, 14/26/{}, 14/28/{},
    								 15/9/{},15/11/{},15/13/{},15/15/{},15/17/{},15/19/{},15/21/{},15/23/{},15/25/{},
    								 16/8/{}, 16/10/{}, 16/12/{}, 16/14/{}, 16/16/{}, 16/18/{}, 16/20/{}, 16/22/{},
    								 16/24/{}, 16/26/{}, 16/30/{},
    								 17/9/{}, 17/11/{}, 17/13/{}, 17/15/{}, 17/17/{}, 17/19/{}, 17/21/{}, 17/23/{},
    								 17/25/{}, 17/27/{}, 17/29/{}, 17/31/{},
    								 18/10/{}, 18/12/{}, 18/14/{}, 18/16/{}, 18/18/{}, 18/20/{}, 18/22/{}, 18/24/{},
    								 18/26/{}, 18/28/{}, 18/30/{}, 18/32/{},
    								 19/11/{}, 19/13/{}, 19/15/{}, 19/17/{}, 19/19/{}, 19/21/{}, 19/23/{}, 19/25/{},
    								 19/27/{}, 19/29/{}, 19/31/{}, 19/33/{}, 19/35/{},
    								 20/10/{}, 20/12/{}, 20/14/{}, 20/16/{}, 20/18/{}, 20/20/{}, 20/22/{}, 20/24/{},
    								 20/26/{}, 20/28/{}, 20/30/{}, 20/32/{}, 20/34/{}, 20/36/{}
    								}{
        \draw (0.4*\x,0.4*\y)node[circle,radius=0.100, draw, fill=black, label={left:\beschriftung}] {};
    }
    \foreach \x/\y/\beschriftung in {11/23/{$(10,32,33,11)$}, 
										12/26/{$(11,35,35,11)$},
									 	14/32/{$(12,40,40,12)$\hspace{3mm}}
    								}{
        \draw (0.4*\x,0.4*\y)node[circle, inner sep=1.2mm, draw=black, fill=red, label={left:\beschriftung}] {};
    }
    \foreach \x/\y/\beschriftung in {10/18/{}, 11/19/{}, 11/21/{}, 12/20/{}, 12/22/{}
    								}{
        \draw (0.4*\x,0.4*\y)node[circle, inner sep=1.2mm, draw=black, fill=black!25, label={left:\beschriftung}] {};
    }
    \foreach \x/\y/\beschriftung in {8/16/{$W_9$}, 12/24/{$P_{11}$}, 16/32/{}, 18/36/{},
									14/30/{$W_{12}^{39}$\hspace*{3mm}}
    								}{
        \draw (0.4*\x,0.4*\y)node[circle, inner sep=1.2mm, draw, fill=black!25] {};
        \draw (0.4*\x,0.4*\y)node[circle, inner sep=1.2mm, cross out, draw, label={left:\beschriftung}] {};
    }
    \foreach \x/\y/\beschriftung in {13/23/{}, 13/25/{}, 13/27/{}, 13/29/{}, 13/31/{}, 13/33/{}, 14/34/{},
    								 14/36/{}, 14/38/{}, 15/27/{}, 15/29/{}, 15/31/{}, 15/33/{},
    								 15/35/{}, 15/37/{}, 15/39/{}, 16/28/{}, 16/34/{}, 16/36/{},
    								 16/38/{}, 17/33/{}, 17/35/{}, 17/37/{}, 17/39/{}, 18/34/{}, 18/38/{},
    								 19/37/{}, 19/39/{}, 20/38/{}
    								}{
        \draw (0.4*\x,0.4*\y)node[circle, inner sep=1.2mm, draw=black, fill=white, label={left:\beschriftung}] {};
    }

\end{tikzpicture} 
\end{center}
 
	This figure presents a particular $2$-dimensional projection
	of $\fset(\mathcal{P}^4)\subset\fset(\mathcal{S}^3)\subset\Z^4$:
	The $x$-axis represents	 
	$\size =f_0+f_3-10$
	of a $4$-polytope or $3$-sphere,
	while the $y$-axis represents 
	$f_1+f_2-20-2\cdot\size$,
	so the slope of a line through the
	origin is “fatness$\,-\,2$.”
	
	Black dots~~\raisebox{-.5mm}{
\begin{tikzpicture}[>=stealth']
    \node[name=schwarz, circle, inner sep=1.2mm, draw, fill=black] {};
\end{tikzpicture} }~~mark data points for which Höppner \cite{Hoeppner-dipl}
	had found polytopes.\\
	Grey crossed dots~~\raisebox{-.5mm}{
\begin{tikzpicture}[>=stealth']
	\node[name=kreiss, circle, inner sep=1.2mm, draw, fill=black!25] {};
    \node[name=kreuz, circle, inner sep=1.2mm, draw, cross out] {};
\end{tikzpicture} }~mark 
	coordinates for which $2$-simple $2$-simplicial polytopes
	were found by Paffenholz \& Werner \cite{PaffenholzWerner:many}, and Werner \cite{WernerThesis}.\\
	Grey dots~~\raisebox{-.5mm}{
\begin{tikzpicture}[>=stealth']
    \node[name=gruen, circle, inner sep=1.2mm, draw=black, fill=black!25] {};
\end{tikzpicture} }~give additional
	data points where we now
	found polytopes.\\
	Red dots~~\raisebox{-.5mm}{
\begin{tikzpicture}[>=stealth']
    \node[name=rot, circle, inner sep=1.2mm, draw=black, fill=red] {};
\end{tikzpicture} }~represent coordinates
	of points for which there are $f$-vectors of $3$-spheres,
	but where we found no $f$-vectors of $4$-polytopes;
	left of the dotted line this means that these do not exist.
	
	The graph shown here is complete up to size $12$,
	that is, to the left of the dotted vertical
	line.
	
	White dots~~\raisebox{-.5mm}{
\begin{tikzpicture}[>=stealth']
    \node[name=kreiss,circle, inner sep=1.2mm,  draw, fill=none] {};
\end{tikzpicture} }~appear only to the right of the
	dotted line: They mark locations where the existence of 
	spheres or of polytopes has not been decided. 

\caption{The size/fatness projection of the $f$-vector sets
	 $\fset(\mathcal{P}^4)\subset\fset(\mathcal{S}^3)$}
\end{figure}

Let us note two more intriguing aspects of our enumeration results, which can 
also be seen in Figure~\ref{fig:1}:
\newpage

\begin{observations}\label{obs:ex}~
	\begin{compactenum}[(i)]
	\item
	The sets of ``small'' $f$-vectors $f=(f_0,f_1,f_2,f_3)$ 
	of $3$-spheres and of $4$-polytopes
	differ in an essential way, which is detected by fatness:
	 \begin{compactitem}%
		\item For $\size(f)\le10$, the $f$-vectors of $3$-spheres and of $4$-polytopes agree.
		\item For $\size(f)\le11$, the maximal fatness for $3$-spheres is $4\frac{1}{11}$, 
			                                           for $4$-polytopes it is $4$.
		\item For $\size(f)\le12$, the maximal fatness for $3$-spheres is $4\frac{1}{6}$, 
					                                   for $4$-polytopes it is still~$4$. 
	 \end{compactitem}
	\item In the range of ``small'' $f$-vectors of $\size(f)\le12$,
	the particularly ``fat'' $4$-polytopes are \emph{$2$-simple and $2$-simplicial} 
	in the sense of Grünbaum \cite[Sect.~4.5]{Gruenbaum}. 
	The exceptionally fat $3$-spheres are not $2$-simple and $2$-simplicial, but they still have $f$-vectors
	that are approximately symmetric, with $|f_0-f_3|$ small.
	\end{compactenum}
\end{observations}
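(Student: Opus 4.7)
The plan is to derive both observations as immediate corollaries of the exhaustive enumeration through $\size(f)\le12$ on which Theorem~\ref{thm:f_differ} already rests. Once the three-stage algorithm sketched in the introduction (enumeration of possible graphs, then of interval-connected Eulerian lattices of length~$5$, then of sphere and polytope realizations) has produced the two sorted finite tables
\[
\{f\in\fset(\mathcal{S}^3):\size(f)\le12\}\qquad\text{and}\qquad\{f\in\fset(\mathcal{P}^4):\size(f)\le12\},
\]
both items reduce to inspection of these lists.

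For part~(i) I would compare the two tables slice by slice in $\size$. Agreement for $\size(f)\le10$ is already contained in Theorem~\ref{thm:f_differ}, since every listed exception has size $\ge11$. The maximal fatnesses are realised by the exceptions themselves:
\[
F(10,32,33,11) = \tfrac{45}{11} = 4\tfrac{1}{11}\qquad\text{and}\qquad F(11,35,35,11) = \tfrac{50}{12} = 4\tfrac{1}{6},
\]
while a pass through the polytope table verifies that no realised $4$-polytope $f$-vector in these size slices has fatness exceeding~$4$; the bound $4$ is attained by the $2$-simple $2$-simplicial examples visible on the dashed $F=4$ line of Figure~\ref{fig:1}, so these polytope maxima are sharp.

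For part~(ii) I would first record the incidence identity for any $2$-simple $2$-simplicial $4$-polytope: each $2$-face is a triangle, and the edge figure at every edge is a triangle, so every edge lies in exactly three $2$-faces; double-counting (edge,\,$2$-face)-incidences yields $3f_2=3f_1$, hence $f_1=f_2$. This rules out the $2$-simple $2$-simplicial property for the sphere $f$-vectors $(10,32,33,11)$ and $(10,33,35,12)$, in which $f_1\neq f_2$. The remaining exceptional sphere $f$-vector $(11,35,35,11)$ does satisfy $f_1=f_2$, so here I would inspect each concrete realisation emitted by the enumeration and verify by hand that at least one edge lies in four or more facets, or that some $2$-face is not a triangle. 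The symmetry bound $|f_0-f_3|\le2$ is read off directly from the three listed $f$-vectors. On the polytope side, the extremal polytopes on the $F=4$ line of Figure~\ref{fig:1} with $\size(f)\le12$ would be cross-referenced against the explicit constructions in Sections~\ref{sec:classification}--\ref{sec:examples}, many of them inherited from the Paffenholz--Werner families, and verified directly to be $2$-simple $2$-simplicial.

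The main obstacle is not the bookkeeping above but the enumeration that precedes it: establishing exhaustiveness of the length-$5$ Eulerian-lattice enumeration through $\size\le12$, and --- considerably more delicate --- certifying non-polytopality for the three exceptional sphere $f$-vectors of Theorem~\ref{thm:f_differ}. Non-realisability as a convex $4$-polytope is in general algorithmically hard, and I would expect these certificates to come from final-polynomial or oriented-matroid obstructions applied to the finitely many candidate face lattices in each $f$-vector slice. Once this combinatorial and realisability work is in place, Observations~\ref{obs:ex} follow by direct inspection.
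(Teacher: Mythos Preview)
Your proposal is correct and matches the paper's approach: both treat these Observations as direct corollaries of the exhaustive enumeration behind Theorem~\ref{thm:f_differ}, read off from the resulting tables and Figure~\ref{fig:1}. The only minor difference is in part~(ii): where you propose verifying the non-2s2s property of the exceptional spheres by direct inspection of their facet lists, the paper instead appeals to the complete classification of 2s2s $3$-spheres and $4$-polytopes of size at most~$14$ from \cite[Thm.~2.1]{BZ_flagvectors}, which simultaneously identifies the fat polytopes ($W_9$, $W_{10}$, the hypersimplex $\Delta_4(2)$ and its dual, $P_{11}$) as 2s2s and confirms that the exceptional spheres are absent from that list.
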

 
For this we recall from Grünbaum
\cite[Sect.~4.5]{Gruenbaum} that a $4$-polytope $P$ is $2$-simple and $2$-simplicial (``2s2s'')
if all $2$-faces are triangles both for $P$ and for its dual. 
The definition extends to $3$-spheres and even to Eulerian lattices of length $5$.
Any such 2s2s object has a symmetric $f$-vector, with $f_0=f_3$ and $f_1=f_2$.
The 2s2s property
is detected by the flag vector, but not by the $f$-vector alone.
Observation \ref{obs:ex}(ii) refers to all the polytopes of fatness at least $4$, which
in the range $\size(f)\le12$, according to the classification 
of 2s2s $4$-polytopes and $3$-spheres of size at most $14$ in
Brinkmann \& Ziegler \cite[Thm.~2.1]{BZ_flagvectors}, are
\begin{compactitem}
	\item Werner's example $W_9$ with $9$ vertices \cite[Thm.~4.2.2]{WernerThesis},
	\item $W_{10}$ as well as the hypersimplex $\Delta_4(2)$ and its dual with $10$ vertices, and 
	\item $P_{11}$ by Paffenholz \& Werner \cite[Sect.~4.1]{PaffenholzWerner:many}.
\end{compactitem}
The pattern continues beyond the range $\size(f)\le12$ of our enumeration,
where we find 
\begin{compactitem}
	\item the 2s2s polytope $W_{12}^{39}$ of Werner and Miyata
			\cite[Tbl.~7.1 right]{WernerThesis} \cite[Sect.~4.2]{Miyata-diss} and
	\item the 2s2s sphere $W_{12}^{40}$ with $f$-vector $(12,40,40,12)$
			constructed by Werner \cite[Tbl.~7.1 left]{WernerThesis}.
\end{compactitem}
For this last example we had shown in \cite{BZ_flagvectors}
that it is non-polytopal and that it is the only 2s2s $3$-sphere with such a flag vector.
As a consequence, we
established that the sets of flag vectors of $4$-polytopes and $3$-spheres differ \cite[Theorem 1.1]{BZ_flagvectors}, but did not achieve a similar statement for sets of $f$-vectors.
This is provided by Theorem~\ref{thm:f_differ}.
However, with our new algorithm presented here (plus massive computation)
we also achieved a complete classification result for the $f$-vector $(12,40,40,12)$.

\begin{theorem}\label{thm:12_vert}
	There are $4$ strongly regular $3$-spheres (all of them self-dual, one of them 
	$2$-simple $2$-simplicial),
	but no $4$-polytopes at all, with the $f$-vector $(12,40,40,12)$.
\end{theorem}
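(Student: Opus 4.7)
The plan is to specialize the paper's three-stage enumeration pipeline (graphs $\to$ Eulerian length-$5$ lattices $\to$ strongly regular $3$-spheres $\to$ $4$-polytopes) to the single target $f$-vector $(12,40,40,12)$. Since $\size = 14$ lies just outside the completely classified range $\size \le 12$, a full enumeration at this size is infeasible, but a targeted run that exploits the symmetry $f_0=f_3$, $f_1=f_2$ of the target should be. First I would list all simple graphs on $12$ vertices with $40$ edges that satisfy the necessary local conditions for being the $1$-skeleton of such a sphere: minimum degree at least $4$, the Grünbaum--Barnette--Reay pair bounds, and compatible vertex-figure types. The tight average degree $80/12 \approx 6.67$, together with symmetry pruning, should keep this candidate list short.

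For each surviving graph I would then run the facet-enumeration and lattice-assembly steps: select combinatorial types of facets (that is, $3$-polytopes whose vertex stars match the vertex links of the graph) and glue them into interval-connected Eulerian lattices of length $5$ with the prescribed $f$-vector. By Proposition~\ref{prop:Eulerian5} every such lattice is the face poset of a regular cell decomposition of a closed $3$-manifold with the intersection property, so a strong-regularity check on vertex and facet links isolates the actual $3$-spheres. I predict that exactly four isomorphism classes of spheres will survive. Self-duality of each can be certified directly by exhibiting an order-reversing automorphism of the face poset, and the 2s2s property, which amounts to every rank-$2$ interval being the boolean lattice of length $3$ both in the poset and in its opposite, should single out precisely the sphere $W_{12}^{40}$ already constructed in~\cite{BZ_flagvectors}.

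The main obstacle is proving non-polytopality for each of the four spheres. For $W_{12}^{40}$ itself the argument of~\cite{BZ_flagvectors} already applies. For each of the three remaining spheres I would parametrize the realization space either through a reduced Gale transform in $\R^{12-4-1}=\R^{7}$ or as an oriented-matroid realizability problem, and then produce a final-polynomial or Positivstellensatz-style infeasibility certificate from the resulting strict facet-incidence inequalities. Exploiting the self-duality of each sphere should roughly halve the effective dimension of the parameter space, which I view as essential for making the certificates computable in practice. The difficulty here is not conceptual but computational: each of the three new spheres requires its own infeasibility proof, and finding the projective coordinates in which non-realizability becomes transparent is exactly the ``massive computation'' to which the paper alludes.
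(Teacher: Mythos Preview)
Your outline is essentially the paper's approach: run Algorithm~\ref{algorithm} on the single target $(12,40,40,12)$, then certify non-realizability of each resulting sphere via oriented-matroid final polynomials. The paper does exactly this (with the IP formulation of Step~(iii) in place of your informal ``glue facets'' description, and \textsc{BISTELLAR} for the sphere check), so the strategy is correct.

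Two details deserve correction. First, ``a strong-regularity check on vertex and facet links isolates the actual $3$-spheres'' is not right: verifying that all vertex links are $2$-spheres only shows that the complex is a closed $3$-manifold, which is already guaranteed by Proposition~\ref{prop:Eulerian5}. Distinguishing $S^3$ from other $3$-manifolds requires an additional step; the paper computes homology and then applies bistellar flips. In this particular enumeration every manifold encountered happens to be a sphere, so the gap is harmless here, but it is a gap. Second, the claim that self-duality ``should roughly halve the effective dimension'' of the realization space is unjustified: combinatorial self-duality of a sphere does not force any putative geometric realization to be projectively self-dual, so you cannot impose that symmetry when searching for an infeasibility certificate. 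The paper makes no such reduction; it simply runs the biquadratic final-polynomial machinery on each sphere individually. Your Gale-transform alternative in $\R^7$ is in principle legitimate but not what is used, and for non-simplicial objects with twelve facets it would likely be less tractable than the direct chirotope approach.
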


So altogether this paper provides six examples of $f$-vectors of $3$-spheres
that are not $f$-vectors of $4$-polytopes, namely the five smallest ones
listed in Theorem~\ref{thm:f_differ} and one more in Theorem~\ref{thm:12_vert}.
Of course one would now want to provide infinitely many examples, 
to show that the cones   $\cone(\fset(\mathcal{P}^4)) \subseteq
  \cone(\fset(\mathcal{S}^3))$ do not coincide, and similar results
for flag vectors and for their cones in higher dimensions.
Our present methods do not seem to provide this.

\section{Objects: Polytopes, Spheres, and Eulerian Lattices}\label{sec:objects} 
 
There have been numerous substantial attempts to 
classify \emph{all} $4$-dimensional polytopes with
some given parameters (e.g., $f$-vectors), or to classify
the parameters that actually occur. 
They all depend on a hierarchy of 
combinatorial/topological/geometric models for convex polytopes
of decreasing generality, which we use systematically in our algorithmic approach.
For basics on convex polytopes, including diagrams/Schlegel diagrams,
we refer to Grünbaum \cite{Gruenbaum} and Ziegler \cite{Z35}.
For regular cell complexes, see Cooke \& Finney \cite{CookeFinney} or Munkres \cite{Munkres:AT}.
Eulerian posets/lattices as combinatorial models arose from the work of Klee \cite{Klee3},
see Stanley \cite[Chap.~3]{Stanley-ec1-2}.
The less common objects we work with can be summarized as follows.

\begin{definition}~
	\begin{compactitem}
		\item A finite graded lattice is \emph{Eulerian} if
		any non-trivial interval has the same number of elements of odd and of even rank;
		it is \emph{interval-connected} if the proper part of any interval of length at least~$3$ is connected.
		\item A CW-sphere is \emph{regular} if the attaching maps of the cells are homeomorphisms also
		on the boundary. The sphere has the \emph{intersection property} if 
		the intersection of any two cells is a single cell (which may be empty). 
	\end{compactitem}
\end{definition}

In this paper we concentrate entirely on the case of $4$-dimensional polytopes,
and correspondingly $3$-spheres and Eulerian lattices of length $5$.
The interval connectivity for Eulerian lattices and the regularity and intersection property for
cellular spheres are always assumed.
We write
\begin{compactitem}
	\item $\mathcal{P}^4$ for the set of combinatorial types of $4$-polytopes; 
	\item $\mathcal{S}^3$ for the set of combinatorial types of $3$-spheres;
	\item $\mathcal{E}^5$ for the isomorphism types of length-$5$ Eulerian lattices.
\end{compactitem}

The boundary complex of any $4$-polytope is a $3$-sphere 
		(regular, cellular, with the intersection property);
the face lattice of any such $3$-sphere is an interval-connected Eulerian lattice of length~$5$.

Polytope theory has produced lots of examples to show that there are strict inclusions
	\[
	\mathcal{P}^4\subset\mathcal{S}^3\subset\mathcal{E}^5
	\]
while there is no difference ``one dimension lower,'' by Steinitz's theorems. 
His theorems also yield that interval-connected Eulerian lattices form an
\emph{excellent} entirely combinatorial model for the topological/geometric structures we are studying.

\begin{proposition}\label{prop:Eulerian5}
	Every interval-connected Eulerian lattice of length $d+1\le 4$ is the face lattice
	of a $d$-polytope. In particular, $\mathcal{P}^3=\mathcal{S}^2=\mathcal{E}^4$.
	
	Every interval-connected Eulerian lattice of length $d+1=5$ is the face lattice
	of a (connected, closed) regular CW $3$-manifold with the intersection property.
\end{proposition}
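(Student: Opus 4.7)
The plan is to construct the underlying regular CW-complex level by level using the diamond property of Eulerian lattices, with part~1 serving as the induction base for part~2.

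For part~1, the cases $d+1\le 3$ are essentially Steinitz's classical theorem plus bookkeeping. Length $\le 2$ is trivial. For length $3$, the diamond property forces every rank-$2$ element to cover exactly two atoms and every atom to be covered by exactly two rank-$2$ elements, so interval-connectedness collapses the resulting disjoint union of cycles to a single polygon. For length $4$, apply the length-$3$ case to each interval $[\hat 0,f]$ with $f$ of rank $3$ and to each interval $[v,\hat 1]$ with $v$ an atom. This realizes the proper part of $L$ as a closed regular $2$-dimensional CW-complex $X$ whose $2$-cells are polygons and whose vertex stars are polygonal disks. The Euler relation $f_0-f_1+f_2=2$ (a consequence of the Eulerian property) gives $\chi(X)=2$, so $X$ is a $2$-sphere; interval-connectedness forces $3$-connectedness of the $1$-skeleton, and Steinitz's theorem then produces a convex $3$-polytope with face lattice $L$.

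For part~2, given an interval-connected Eulerian lattice $L$ of length $5$, I would build a regular CW-complex $X$ of dimension $3$ by induction on skeleton: one vertex per atom; for each rank-$2$ element an edge between its two atoms (diamond property); for each rank-$3$ element $f$ a $2$-disk attached along the polygon encoded by $[\hat 0,f]$, available by part~1 (interval-connectedness is inherited by sub-intervals, since long intervals inside $[\hat 0,f]$ are long intervals of $L$); for each coatom $F$ a $3$-ball attached along the boundary $2$-sphere of the convex $3$-polytope associated to $[\hat 0,F]$ by part~1, whose proper part is already embedded as a subcomplex of the $2$-skeleton. All attaching maps are homeomorphisms on boundaries, so $X$ is regular.

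To show $X$ is a closed $3$-manifold with the intersection property, I would check that each cell $c$ of dimension $k\in\{0,1,2\}$ has a spherical link. The link is combinatorially described by the upper interval $[c,\hat 1]$, which is an interval-connected Eulerian lattice of length $4-k$; by part~1 its proper part is a $(2-k)$-sphere, so $X$ has spherical links everywhere and is a closed $3$-manifold. The intersection property follows because the face poset of $X$ is the proper part of $L$: intersections of closed cells correspond to meets in $L$ and are themselves single cells (or empty). Connectedness of $X$ follows from interval-connectedness applied to $L$ itself. The main obstacle is hidden in the length-$4$ case of part~1: once the local data are identified as polygons and a global Euler count gives a sphere, one still needs $3$-connectedness of the $1$-skeleton to invoke Steinitz's theorem, and this is where interval-connectedness plays its essential role. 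After that is settled, the length-$5$ construction is bookkeeping, and the manifold check reduces to a repeated application of part~1 at each upper interval.
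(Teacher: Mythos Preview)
Your argument is correct and follows essentially the same route as the paper's proof sketch, just fleshed out in considerably more detail: build the CW complex skeleton by skeleton, use the Eulerian count to identify the length-$4$ case as a $2$-sphere, invoke Steinitz, and then for length $5$ check that every upper interval gives a spherical link. One small remark on the point you flag as the ``main obstacle'': the paper invokes Steinitz's theorem in the form \emph{every regular CW $2$-sphere with the intersection property is polytopal} (Steinitz's ``Bedingung des Nicht\"ubergreifens''), which bypasses a separate verification of $3$-connectedness; in your graph-theoretic formulation the $3$-connectedness really comes from the \emph{lattice} (meet/intersection) property on the sphere---two non-adjacent vertices cannot lie on two distinct $2$-faces, since their join would have to have rank~$\ge 3$ yet sit below the meet of those faces---rather than from interval-connectedness per se, which you have already spent earlier to make the faces single polygons.
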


\begin{proof}[Proof sketch]
	For $d+1\le3$ there is little to prove.
	
	For $d+1=4$ the Eulerian lattice is the face lattice of a connected $2$-manifold
	of Euler characteristic $2$, so we have a sphere.
	The lattice property corresponds to what Steinitz calls
	``Bedingung des Nichtübergreifens''  \cite[S.~179]{SteinitzVorlesung}, 
	which is exactly the intersection
	property for a cellular $2$-sphere.
	Steinitz's Theorem \cite{SteinitzThm,SteinitzVorlesung} yields that
	every such $2$-sphere can be realized as a convex polytope.
	
	For $d+1=5$ the Eulerian lattice is the face poset of a closed connected
	$3$-manifold, whose cells and vertex links are polytopal by Steinitz's theorem.
	However, the fact that this manifold has Euler characteristic $0$
	yields no additional information about its type, by Poincaré duality.
\end{proof}

\section{Enumeration Algorithm}\label{sec:algorithm}

We here propose a new algorithm, which constructs, for 
a given vector $(f_0,f_1,f_2,f_3)$, first the graphs and then
the face lattices of all $3$-manifolds with this $f$-vector, 
by using 0/1 integer programming in order to enumerate all families
of facets that fit to this graph and all other constraints. The algorithm has the following outline:
\begin{alg}\label{algorithm}
 \verb+find_lattices(f)+\\
 \textup{INPUT:} A vector $(f_0,f_1,f_2,f_3)\in\Z^4$\\
 \textup{OUTPUT:} All Eulerian lattices of length $5$ with this $f$-vector
\begin{compactenum}[\rm(i)]
	\item enumerate all graphs $G$ on $f_0$ vertices and $f_1$ edges that are $4$-connected;
	\item for every graph $G$ find all induced subgraphs that are planar and $3$-connected;
	\item construct for every graph $G$ an integer program (IP) with binary variables 
		corresponding to the possible
		facets and ridges (faces of the facets), and with constraints given by the $f$-vector, proper
		intersection, the Euler relation, and the graph;
	\item enumerate all feasible solutions of this IP;
	\item check for every feasible solution whether it gives an Eulerian lattice.
\end{compactenum}
\end{alg}

\begin{proposition}
	Algorithm~\ref{algorithm} enumerates all interval-connected Eulerian lattices of length $5$ with $f$-vector $(f_0,f_1,f_2,f_3)$.
\end{proposition}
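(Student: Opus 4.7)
The plan is to establish two inclusions: soundness, that every object produced is an interval-connected Eulerian lattice of length~$5$ with the prescribed $f$-vector; and completeness, that every such lattice $L$ is produced. Soundness is immediate from step~(v), which explicitly checks the Eulerian-lattice property on each candidate. The real work therefore concentrates on completeness: for an arbitrary such $L$, I would trace through the algorithm and show that $L$ survives each step.

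First I would examine the $1$-skeleton $G$ of $L$. It has $f_0$ vertices and $f_1$ edges by the $f$-vector hypothesis, and I would argue that it is $4$-connected so that step~(i) enumerates it. Proposition~\ref{prop:Eulerian5} applied to each vertex-link interval $[v,\hat{1}]$, which is interval-connected Eulerian of length~$4$, shows that every vertex link of $L$ is the face lattice of a $3$-polytope; in particular its graph is $3$-connected and planar. A Balinski-type argument on the resulting regular CW $3$-manifold then rules out any $3$-vertex cut of $G$. Next I would show that for every facet $F$ of $L$, the interval $[\hat{0},F]$ is itself an interval-connected Eulerian lattice of length~$4$, hence, again by Proposition~\ref{prop:Eulerian5}, the face lattice of a $3$-polytope. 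Its $1$-skeleton is therefore planar and $3$-connected. The intersection property (equivalent to the lattice condition, by Proposition~\ref{prop:Eulerian5}) forces this subgraph to be \emph{induced} in $G$, since any edge with both endpoints in $F$ is a face of $F$. Hence step~(ii) detects every facet of~$L$.

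For steps~(iii)--(iv), I would take the 0/1 characteristic vector of the actual facets and ridges of $L$ and verify that it satisfies every IP constraint: the $f$-vector holds by hypothesis, proper intersection by the intersection property, the Euler relation because the underlying closed $3$-manifold has Euler characteristic $0$, and the graph constraints tautologically. This vector is therefore a feasible solution found in step~(iv), and since $L$ genuinely is an Eulerian lattice it passes the verification in step~(v).

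The main obstacle is the $4$-connectivity of $G$ in step~(i), since this has to be shown for lattices that need not come from polytopes or even from spheres. This is where the interval-connectedness hypothesis in the definition of $L$ is essential: it propagates to all intervals, including the vertex-link and facet intervals, and via Proposition~\ref{prop:Eulerian5} makes them polytopal, supplying the $3$-connected planar structure that feeds the Balinski-style argument. A secondary routine point is to spell out linear encodings of "proper intersection" and "each ridge is incident to exactly two facets" in step~(iii), ensuring that every genuine face lattice produces a feasible IP solution, while the final soundness filter is left to step~(v).
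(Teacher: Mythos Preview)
Your proposal is correct and follows essentially the same route as the paper: invoke Proposition~\ref{prop:Eulerian5} to interpret $L$ as a regular CW $3$-manifold with the intersection property, use $4$-connectivity of its graph for step~(i), use the length-$4$ case of Proposition~\ref{prop:Eulerian5} to see that facet graphs are planar $3$-connected induced subgraphs for step~(ii), and then check that the characteristic vector of the true facets and ridges is feasible for the IP. One small point: the ``Euler relation'' constraints in the IP are imposed on the upper intervals above each vertex and edge (so that feasible solutions are Eulerian posets), not just globally via $\chi=0$; your justification should appeal to the Eulerian property of $L$ on these intervals rather than to the manifold's Euler characteristic, though of course $L$ satisfies both.
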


\begin{proof}
	We rely on the interpretation of interval-connected length $5$ Eulerian lattices 
	as face lattices of cellular regular $3$-manifolds with intersection property in Proposition~\ref{prop:Eulerian5}.
	Since the graph of any such manifold is $4$-connected, 
	Step~(i) will not exclude any graph of some $3$-manifold
	with $f$-vector $(f_0,f_1,f_2,f_3)$.
	
	Also by Proposition~\ref{prop:Eulerian5} the graphs of interval-connected Eulerian lattices of
	length $4$ (and thus of facets of cellular $4$-manifolds) are exactly the planar and $3$-connected
	graphs. Thus, with Step~(ii) we find a list $\mathcal{F}_G$ of all potential facets for a manifold with the given graph $G$.
	
	From the list $\mathcal{F}_G$, we also get the list $\mathcal{R}_G$
	of the potential ridges, simply from the faces of the facets. We now construct a 0/1-IP 
	whose variables $x_i$ represent the facets $F_i$, and the variables $y_j$ the ridges $R_j$, 
	such that all solutions correspond to pseudomanifolds formed by a subset of the facets in $\mathcal{F}_G$ and 
	such that all face lattices of $3$-manifolds with graph $G$ and $f$-vector $(f_0,f_1,f_2,f_3)$
	are feasible solutions, with the constraints
	\begin{eqnarray}
		\sum_i x_i &=& f_3\label{eq:IP_f3}\\
		\sum_j y_j &=& f_2\label{eq:IP_f2}\\
		2y_j-\sum_{F_i\supset R_j} x_i &=& 0\qquad\textrm{for all ridges }R_j\label{eq:IP_ridges}\\
		x_i,y_j&\in &\{0,1\}.\label{eq:IP_binary}
	\end{eqnarray}
	Condition~(\ref{eq:IP_binary}) says that
	all variables are binary, which means that if a variable in the solution is $1$ the corresponding face
	will selected.
	Equations~(\ref{eq:IP_f3}) and (\ref{eq:IP_f2}) enforce that the total number of facets and ridges selected is
	$f_3$, resp.~$f_2$. Equation (\ref{eq:IP_ridges}) ensures that ridge $R_j$ is used if and only if
	precisely two facets containing it are selected.  Similarly, we get constraints from the Euler relation for the
	intervals above the vertices and edges, such that all feasible solutions correspond to 
	Eulerian posets. Moreover, for every edge we get an inequality forcing the number
	of faces containing it to be larger than zero. Finally, we get inequalities $x_i+x_j\leq 1$ for pairs
	of facets $F_i,F_j$ if their intersection is not \emph{proper} (i.e.~that not both can appear in a
	$3$-manifold simultaneously).
	Since the face lattice of any $3$-manifold with the given $f$-vector and graph $G$ satisfies 
	the constraints of the IP, it will be in the set of feasible solutions of this IP. Therefore, with
	the last step we can complete the enumeration of all interval-connected Eulerian lattices
	with the given $f$-vector.
\end{proof}

We implemented Algorithm \ref{algorithm}
in \emph{sage} \cite{sage}, using the \emph{geng}-function of \emph{nauty}~\cite{nauty} 
(which is a built-in function of \emph{sage}) to enumerate all graphs on $f_0$ vertices, 
with $f_1$ edges, with minimal vertex degree at least $4$, and being $2$-connected 
(\emph{nauty} cannot enumerate $4$-connected graphs, so we had to relax to $2$-connectedness, 
but this did not include too many extra graphs), and the MILP-library of \emph{sage} 
to check the IPs for feasibility and to enumerate all their solutions. 
We enumerated all feasible solutions iteratively: Given a feasible solution, we store it 
and set the sum of the $f_3$ variables corresponding to the facets of this solution to be 
at most $f_3-1$. Thus, we excluded with an additional constraint precisely the solution we 
just found and optimized again. By iterating this until no feasible solution remained, we 
enumerated all feasible solutions of the original IP.

Finally we had to check every solution to represent an interval-connected Eulerian lattice of length~$5$:
By construction, we were looking at Eulerian posets.
For each of these interval-connectivity was easy to check, as was the intersection property:
Both these properties were not completely built into our IP.
Then we triangulated the corresponding manifold and used \emph{sage} to calculate the Betti numbers,
and thus verified that in all cases considered we were dealing with homology spheres.
Then we used \emph{BI\-STELLAR} by Lutz~\cite{bistellar} to show that each of them was 
flip-equivalent to the boundary of the simplex, and thus a genuine sphere. 

\section{Enumeration and Classification Results}\label{sec:classification}

For the proof of Theorem~\ref{thm:f_differ},
we started with the generation of all potential flag-vectors
bounded by  $f_0+f_3\le22$, 
that is, all integer vectors $(f_0,f_1,f_2,f_3;f_{03})\in\Z^5$
that satisfy all the linear and non-linear conditions on $f$-vectors 
and of flag vectors
that were known to be valid for Eulerian lattices with the intersection property
of length~$5$, as given by
Barnette \cite{barnette72:_inequal},
Bayer \cite{Bay},
and Ling \cite{ling:flag-vectors}.
(See Bayer \& Lee \cite{BaLee} and Höppner \& Ziegler \cite{HoeppnerZie} for surveys.)
Moreover, as we in the algorithmic approach started to add to the $f$-vector information
specific data about the combinatorial types of facets used, we could make use of
constraints such as 
\begin{eqnarray*}
f_{02} - 4f_2+3f_1-2f_0&\leq& \tbinom{f_0}{2}- \tfrac{1}{2} \sum_{{F\,\textrm{facet}},\ {f_0(F)\geq 7}} (m_i(F)+f_{02}(F)-3f_2(F))\\
&& - \#\textup{facets with 6 vertices}+ \tfrac{1}{2}\#\textup{pyramids over pentagon},
\end{eqnarray*}
where $m_i(F)$ denotes the number of interior edges of a face $F$,
proved in Brinkmann \cite[Sect.~2.2.1]{B_thesis},
which sharpens an inequality by Bayer \cite{Bay}.

	Moreover, we could (and did) assume that $f_0,f_3\ge9$,
	as the objects with up to $8$ vertices and facets have been enumerated
	and analyzed in detail by Altshuler \& Steinberg~\cite{Spheres8v1,Spheres8v2}.
	
	Furthermore, we ticked off on our candidate list all those vectors that are known
	to occur as $f$-vectors of $4$-polytopes, 
	for example from the study of Höppner \& Ziegler \cite{HoeppnerZie}  
	or the enumeration of 2s2s-polytopes in Brinkmann \& Ziegler \cite{BZ_flagvectors}.
	
	For all remaining candidate vectors we  
	enumerated all compatible Eulerian lattices by Algorithm \verb+find_lattices(f)+,
	and then used the methods detailed in Brinkmann \& Ziegler \cite{BZ_flagvectors}
in order to  
\begin{compactitem}
	\item either use first numerical non-linear optimization techniques and then
		exact arithmetic sharpenings in order to find rational coordinates for at least one polytope 
		with the given $f$-vector,
	\item or use biquadratic final polynomials for partial oriented matroids
	 	 in order to prove that \emph{all} spheres for the given $f$-vector are non-realizable.
\end{compactitem}
The results are shown in Table \ref{tbl:enumeration_results_12}:
It lists, for each potential $f$-vector considered, 
the number of graphs to be checked 
(graphs on $f_0$ vertices, with $f_1$ edges, with minimal vertex degree at least~$4$, and being $2$-connected),
and the numbers 
\begin{compactitem}
	\item $\#\mathcal{E}^5$ of Eulerian lattices of length $5$,
	\item $\#\mathcal{S}^3$ of cellular $3$-spheres, 
	\item $\#$np  of non-polytopal $3$-spheres among them, and
	\item $\#\mathcal{P}^4$ of convex $4$-polytopes
\end{compactitem}
with the given $f$-vector.
In some instances for the last two quantitites we just give lower bounds,
if we did not decide all cases.
An asterisk $*$ marks objects where we have exact coordinates for at least one polytope and 
approximate (floating point) coordinates for the others. 
Blank spaces represent missing data (e.g.~not enumerated/\allowbreak calculated). 
In particular for $f_0=11$ we did not enumerate all $f$-vectors, but restricted ourselves to constructing polytopes.

\LTXtable{\textwidth}{Table1.tex}

The spheres with the particular $f$-vectors 
$(10,32,33,11)$, 
$(10,33,35,12)$, and 
$(11,35,35,11)$ 
of Theorem~\ref{thm:f_differ}
will be presented and discussed in Section~\ref{sec:examples}.
\bigskip

The proof of Theorem~\ref{thm:12_vert} follows the same pattern,
with considerably higher computation times.
Table \ref{tbl:enumeration_results_12} shows the results of the computation for the 
potential $f$-vectors $(12,m,m,12)$ for large $m$: The numbers of graphs to check (graphs on $f_0$ vertices, with $f_1$ edges, with minimal vertex degree at least $4$, $2$-connected) and the numbers of strongly regular $3$-manifolds, strongly regular $3$-spheres, non-polytopal spheres, and $4$-polytopes.  
Blank spaces represent missing data (e.g.~not enumerated or calculated). For time reasons, and since there is a polytope, we did not enumerate the manifolds with $f$-vector $(12,39,39,12)$. 
  The results for larger $m$ follow as any manifold with such an $f$-vector would be 2s2s, as verified in Brinkmann \cite[Prop.~2.2.19]{B_thesis},
  and these we have enumerated, see Brinkmann \& Ziegler \cite[Thm.~2.1]{BZ_flagvectors}.  

\LTXtable{\textwidth}{Table2.tex}

\section{Examples}\label{sec:examples}

According to Theorem \ref{thm:f_differ} there are five $f$-vectors for which there 
is at least one $3$-sphere but no $4$-polytope. 
In this section we will present these $3$-spheres.
For each of these $f$-vectors,
\begin{compactitem}
	\item the fact that there are no other $3$-spheres than those we present in the following
		depends on massive computation and does not seem to have a reasonably short or ``compact'' proof,
	\item the fact that the objects that we present are, indeed, spheres, can be verified in a
		variety of ways; in the following we present coordinates and images for a diagram (in the sense
			of polytope theory, see Ziegler \cite[Lect.~5]{Z35}),
	\item the fact that the spheres are not polytopal was verified on the computer with oriented matroid
		techniques; in principle, one can extract human-verifiable short proofs from the computation results;
		for this we give one example below. 
\end{compactitem}


\begin{examples}\label{ex:10_32_33_11}
	There are two $3$-spheres with $f$-vector $(10,32,33,11)$:
	\begin{compactitem}
		\item 
			The sphere $(10_{32,33}^0)$ is given by the facet list
			\smallskip
			
			\begin{minipage}{0.49\textwidth} 
			$F_{\hz0} = \{v_{\hz0}, v_{\hz2}, v_{\hz4}, v_{\hz5}, v_{\hz9}\}$\\ 
			$F_{\hz1} = \{v_{\hz0}, v_{\hz2}, v_{\hz4}, v_{\hz6}, v_{\hz8}\}$\\ 
			$F_{\hz2} = \{v_{\hz1}, v_{\hz3}, v_{\hz6}, v_{\hz7}, v_{\hz9}\}$\\ 
			$F_{\hz3} = \{v_{\hz1}, v_{\hz3}, v_{\hz4}, v_{\hz6}, v_{\hz8}\}$\\ 
			$F_{\hz4} = \{v_{\hz0}, v_{\hz2}, v_{\hz5}, v_{\hz7}, v_{\hz8}\}$\\
			$F_{\hz5} = \{v_{\hz1}, v_{\hz3}, v_{\hz5}, v_{\hz7}, v_{\hz8}\}$ 
			\end{minipage}
			\begin{minipage}{0.49\textwidth} 
			
			$F_{\hz6} = \{v_{\hz0}, v_{\hz1}, v_{\hz4}, v_{\hz6}, v_{\hz9}\}$\\ 
			$F_{\hz7} = \{v_{\hz2}, v_{\hz3}, v_{\hz5}, v_{\hz7}, v_{\hz9}\}$\\ 
			$F_{\hz8} = \{v_{\hz1}, v_{\hz2}, v_{\hz4}, v_{\hz7}, v_{\hz8}\}$\\ 
			$F_{\hz9} = \{v_{\hz1}, v_{\hz2}, v_{\hz4}, v_{\hz7}, v_{\hz9}\}$\\ 
			$F_{10} = \{v_{\hz0}, v_{\hz3}, v_{\hz5}, v_{\hz6}, v_{\hz8}, v_{\hz9}\}$\\
			\end{minipage}
			\smallskip
			
			It is non-polytopal, but it has diagrams based on each of the facets $F_0$, $F_1$, $F_2$, $F_3$, $F_4$, $F_5$, $F_6$, and $F_7$, 
			but not based on one of $F_8$, $F_9$, or $F_{10}$. 
			A diagram based on facet $F_2$ is given in Figure~\ref{fig:diagram_10_32_33_11_0}.
			This sphere cannot be realized by a fan, and thus it is not  
			star-shaped in the sense of Ewald \cite[Sect.~III.5]{Ewal}.
		\item 
 			The sphere $(10_{32,33}^1)$ is given by the facet list
 			\smallskip
			
			\begin{minipage}{0.49\textwidth} 
			$F_{\hz0} = \{v_{\hz0}, v_{\hz3}, v_{\hz5}, v_{\hz6}, v_{\hz8}\}$\\ 
			$F_{\hz1} = \{v_{\hz0}, v_{\hz4}, v_{\hz5}, v_{\hz7}, v_{\hz8}\}$\\ 
			$F_{\hz2} = \{v_{\hz0}, v_{\hz3}, v_{\hz4}, v_{\hz6}, v_{\hz7}\}$\\ 
			$F_{\hz3} = \{v_{\hz0}, v_{\hz1}, v_{\hz3}, v_{\hz5}, v_{\hz7}\}$\\ 
			$F_{\hz4} = \{v_{\hz1}, v_{\hz3}, v_{\hz5}, v_{\hz8}, v_{\hz9}\}$\\
			$F_{\hz5} = \{v_{\hz1}, v_{\hz3}, v_{\hz6}, v_{\hz7}, v_{\hz9}\}$ 
			\end{minipage}
			\begin{minipage}{0.49\textwidth}
			$F_{\hz6} = \{v_{\hz0}, v_{\hz2}, v_{\hz4}, v_{\hz6}, v_{\hz8}\}$\\ 
			$F_{\hz7} = \{v_{\hz2}, v_{\hz4}, v_{\hz6}, v_{\hz7}, v_{\hz9}\}$\\ 
			$F_{\hz8} = \{v_{\hz2}, v_{\hz4}, v_{\hz5}, v_{\hz8}, v_{\hz9}\}$\\ 
			$F_{\hz9} = \{v_{\hz1}, v_{\hz4}, v_{\hz5}, v_{\hz7}, v_{\hz9}\}$\\ 
			$F_{10} = \{v_{\hz2}, v_{\hz3}, v_{\hz6}, v_{\hz8}, v_{\hz9}\}$\\
			\end{minipage}
			\smallskip
			
			It is non-polytopal, but it has a diagram based on every facet and it can be represented 
			by a fan. 
			A diagram based on facet $F_2$ is given in Figure~\ref{fig:diagram_10_32_33_11_1}.
	\end{compactitem}
\end{examples}			

\begin{figure}[ht]
\centering 
\begin{minipage}{0.3\textwidth}
\vspace*{6pt}
$\underline{F_2}$\\
$v_{\hz0}  =  (906, 197, 915)$ \\ 
$v_{\hz1}  =  (228623/5810, 18, 986)$ \\ 
$v_{\hz2}  =  (90, 942, 119)$ \\ 
$v_{\hz3}  =  (983, 18, 10)$ \\ 
$v_{\hz4}  =  (485, 502, 941)$ \\ 
$v_{\hz5}  =  (448, 647, 296)$ \\ 
$v_{\hz6}  =  (974, 18, 908)$ \\ 
$v_{\hz7}  =  (18, 977, 14)$ \\ 
$v_{\hz8}  =  (665, 333, 592)$ \\ 
$v_{\hz9}  =  (983, 990, 985)$
\end{minipage}
\begin{minipage}{0.49\textwidth}
\includegraphics[scale=0.63]{./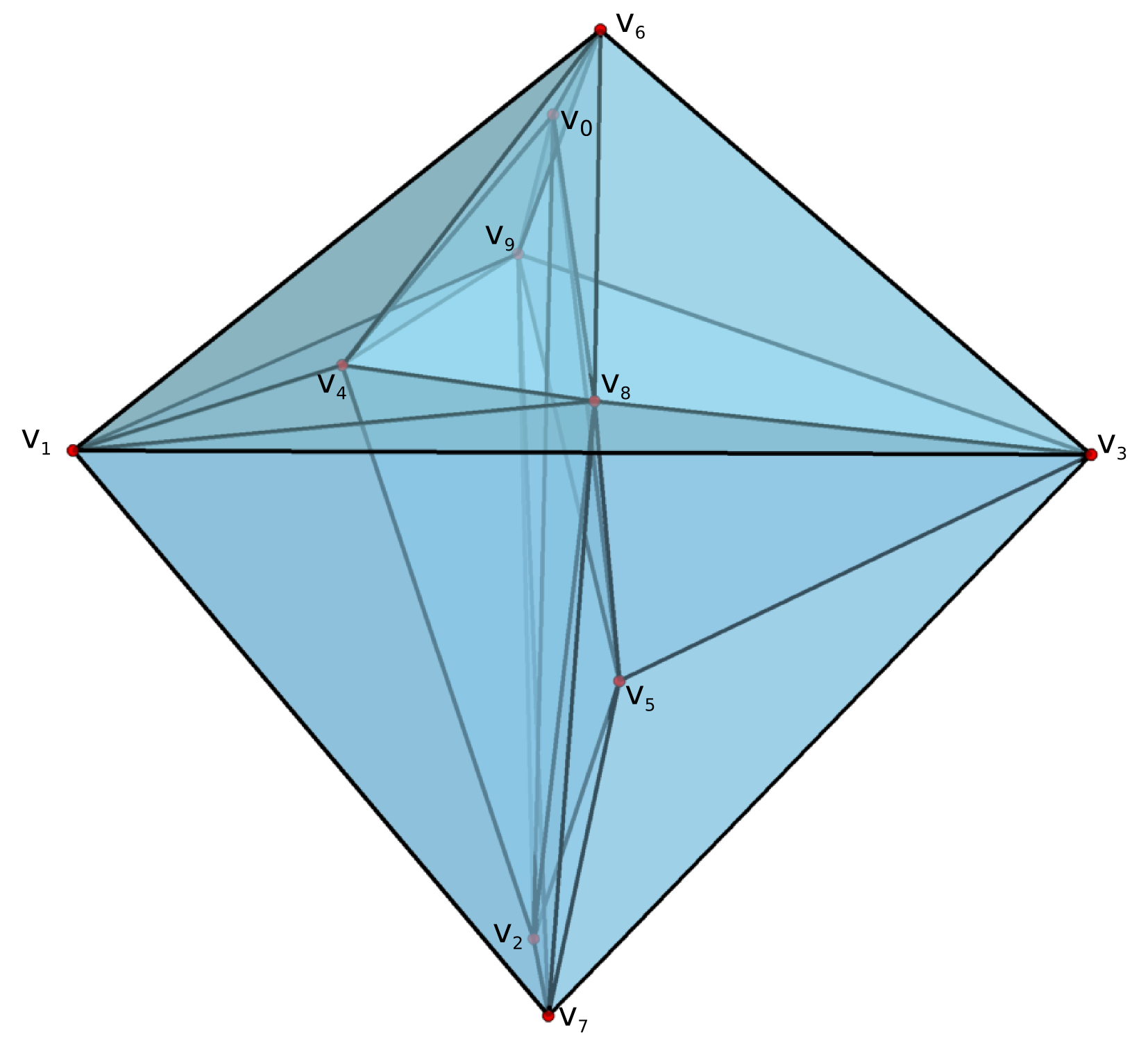}
\end{minipage}
\caption{A diagram based on facet $F_2$ for the sphere $(10_{32,33}^0)$ with $f$-vector $(10,32,33,11)$.}
\label{fig:diagram_10_32_33_11_0}
\end{figure}

\begin{figure}
\centering
\begin{minipage}{0.3\textwidth}
\vspace*{5pt}
$\underline{F_0}$\\
$v_{\hz0}  =  (11, 10, 26)$ \\ 
$v_{\hz1}  =  (13, 16, 10)$ \\ 
$v_{\hz2}  =  (9, 10, 11)$ \\ 
$v_{\hz3}  =  (16, 8, 10)$ \\ 
$v_{\hz4}  =  (9, 11, 14)$ \\ 
$v_{\hz5}  =  (12, 24, 9)$ \\ 
$v_{\hz6}  =  (11, 9, 11)$ \\ 
$v_{\hz7}  =  (11, 14, 16)$ \\ 
$v_{\hz8}  =  (5, 10, 8)$ \\ 
$v_{\hz9}  =  (11, 13, 10)$
\end{minipage}
\begin{minipage}{0.49\textwidth}
\includegraphics[scale=0.6]{./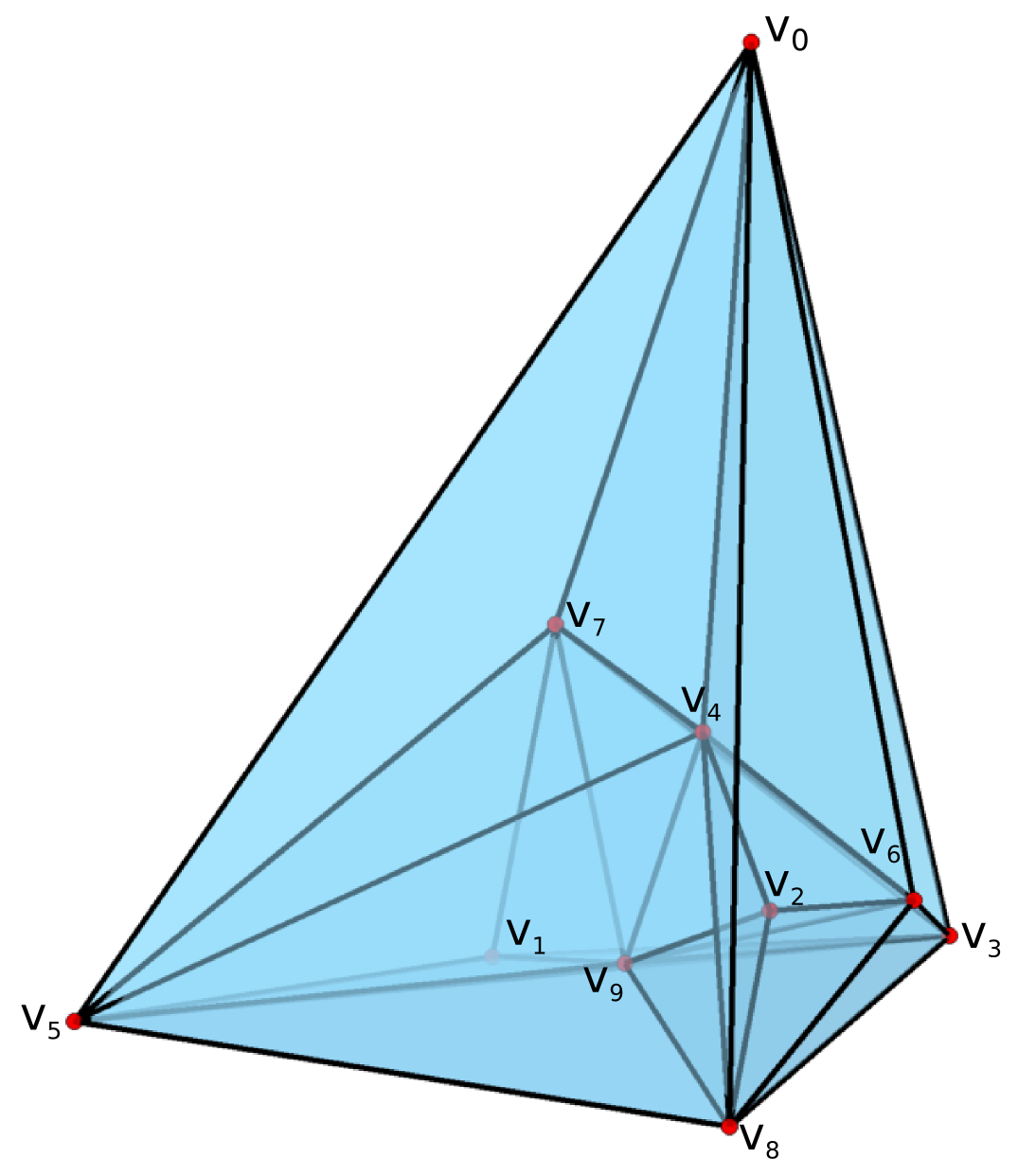}
\end{minipage}
\caption{A diagram based on facet $F_0$ for the sphere $(10_{32,33}^1)$ with $f$-vector $(10,32,33,11)$.}
\label{fig:diagram_10_32_33_11_1}
\end{figure}

We did not manage to decide whether the second sphere $(10_{32,33}^1)$ has a star-shaped embedding.
An oriented matroid that would support such an embedding exists.
(Clearly every star-shaped sphere can be represented by a fan. The converse is true for simplicial spheres,
but not in general.)

The following is an example for a human-verifiable non-polytopality proof, for the first sphere in
Proposition~\ref{ex:10_32_33_11}. 
The non-existence proofs for diagrams use the same technique.
For more details and more examples see Brinkmann \cite{B_thesis}.

\begin{proposition}\label{prop:10_32_33_11_a_np}
The sphere $(10_{32,33}^0)$ is non-polytopal.
\end{proposition}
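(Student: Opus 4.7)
The plan is to derive a contradiction from the hypothetical polytopal realization via the oriented matroid of the vertices. Suppose a convex $4$-polytope $P=\operatorname{conv}\{v_0,\dots,v_9\}\subset\R^4$ realizes $(10^0_{32,33})$. Homogenize to $\hat v_i=(v_i,1)\in\R^5$, let $[i_0 i_1 i_2 i_3 i_4]:=\det(\hat v_{i_0},\dots,\hat v_{i_4})$, and let $\chi$ denote the associated chirotope on $\binom{[10]}{5}$. The face lattice forces rigid sign data on $\chi$: each of the ten pentagonal facets $F_0,\dots,F_9$ contributes one vanishing bracket (its five vertices are coplanar in $\R^4$, so their lifts are linearly dependent in $\R^5$); the six-vertex facet $F_{10}$ contributes $\binom{6}{5}=6$ vanishing brackets; and for every facet $F$ and every vertex $v_k\notin F$, the point $v_k$ lies on the interior side of the affine hyperplane spanned by $F$, which, together with the affine dependence among the vertices of $F$, fixes the signs of the brackets obtained from $F$ by exchanging one vertex for $k$.

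First I would carry out this sign propagation systematically, combining the facet-induced signs with the Grassmann--Pl\"ucker three-term relations
\[
[Aij]\,[Akl]-[Aik]\,[Ajl]+[Ail]\,[Ajk]=0,\qquad |A|=3,\ \{i,j,k,l\}\subset[10]\setminus A,
\]
to obtain a partial chirotope covering as many $5$-subsets as possible. I would then search, by linear programming in the signed Pl\"ucker space, for a \emph{biquadratic final polynomial} in the sense of Bokowski--Richter-Gebert: a rational combination of products of pairs of Pl\"ucker relations that is identically zero as a polynomial in the brackets, but whose individual monomials, evaluated under the deduced signs, all acquire a common strict sign. The existence of such an identity is incompatible with any realization of $\chi$, hence with the existence of $P$.

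The main obstacle is computational and expository rather than logical: with $\binom{10}{5}=252$ brackets and hundreds of three-term relations, finding a short final polynomial is a nontrivial search, carried out here by machine. Once a candidate identity is produced, the proof reduces to three routine verifications, namely that the listed signs are forced by the stated facet data, that the displayed polynomial identity is a linear consequence of the Grassmann--Pl\"ucker relations, and that every monomial acquires the asserted strict sign under the deduced partial chirotope. The real work lies in pruning the machine output into a compact, hand-checkable certificate, which is the step I would put the most effort into when writing the proof down.
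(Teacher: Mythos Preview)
Your plan is correct and follows the same oriented-matroid strategy as the paper: force a partial chirotope from the facet hyperplanes, propagate signs via three-term Grassmann--Pl\"ucker relations, and exhibit an infeasibility certificate.

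The only noteworthy difference is in the form of the certificate. You aim at a biquadratic final polynomial found by linear programming in the signed Pl\"ucker space. The paper never needs that machinery here: after normalizing $\chi(v_0,v_2,v_4,v_9,v_i)=+1$ for $v_i\notin F_0$ and chasing signs through the facets $F_0,\dots,F_{10}$, a handful of further signs are pinned down by individual three-term relations, and the contradiction is a \emph{single} Grassmann--Pl\"ucker relation on the triple $\{v_4,v_8,v_9\}$ and the quadruple $\{v_2,v_5,v_6,v_7\}$ in which all three products come out strictly positive. So the paper's certificate is the degenerate ``one-relation'' case of what you propose; it is shorter and hand-checkable without any LP step. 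Your more general search would of course also find this (or an equivalent) witness, at the cost of heavier computation and a less self-contained write-up.
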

 
\begin{proof}
We will use a similar oriented matroid approach as in~\cite{BZ_flagvectors}. 
The following arguments may be verified with reference to the 
list of labeled facets displayed in Figure~\ref{fig:10_32_33_11_a_facets}.

\begin{figure}[ht]
\centering
\includegraphics[scale=0.9]{./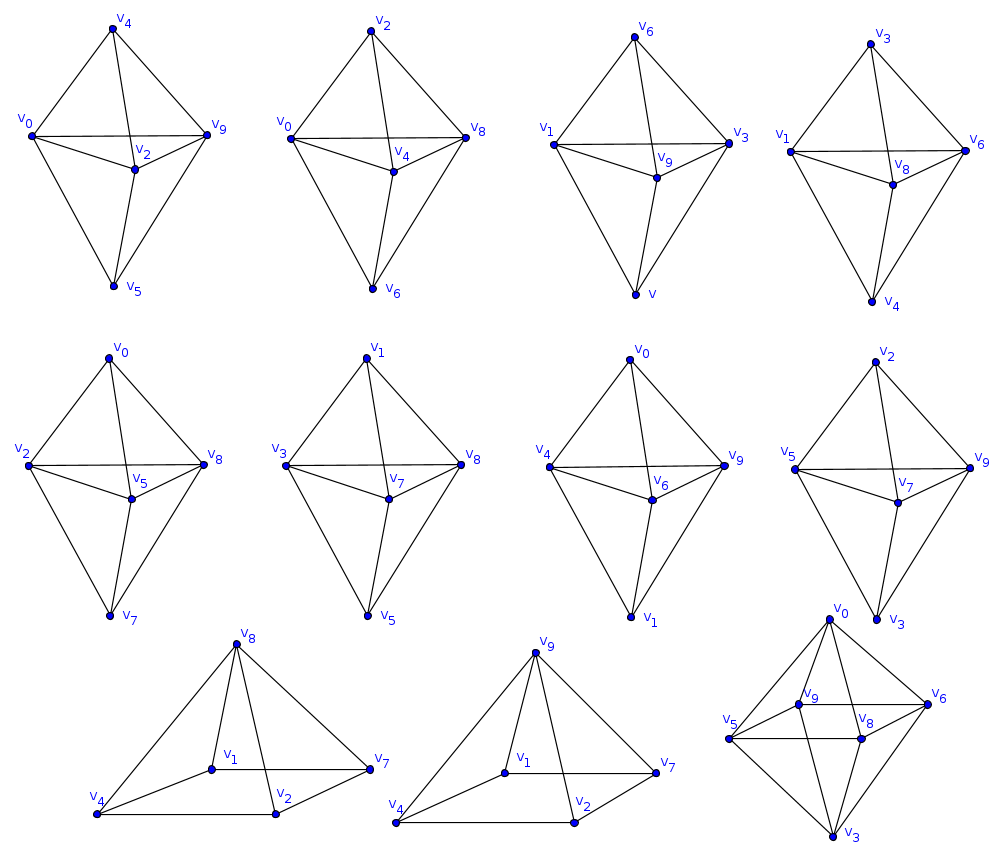}
\caption{These are the facets of the sphere $(10_{32,33}^0)$ from $F_0$ (top left) to $F_{10}$ (bottom right).}
\label{fig:10_32_33_11_a_facets}
\end{figure}

With reference to facet $F_0$, we may choose $\chi(v_{\hz0}, v_{\hz2}, v_{\hz4}, v_{\hz9}, v_{\hz i}) = \HM1$, 
for all $v_i\not\in F_0$. With this we can derive:
\begin{eqnarray}
\chi(v_{3}, v_{5}, v_{6}, v_{8}, v_{9}) \stackrel{F_{10}}{=} \HM0 , \label{eq:11_32_33_a_15}\\
\chi(v_{0}, v_{1}, v_{2}, v_{4}, v_{9}) = -1 & \stackrel{F_{\hz9}}{\Rightarrow} \chi(v_{1}, v_{2}, v_{4}, v_{5}, v_{9}) = \HM1 & \stackrel{F_{\hz0}}{\Rightarrow} \chi(v_{2}, v_{4}, v_{5}, v_{8}, v_{9}) = -1 , \hspace*{26pt}\label{eq:11_32_33_a_1}
 \end{eqnarray}
\begin{eqnarray} 
\chi(v_{0}, v_{2}, v_{4}, v_{8}, v_{9}) = -1 & \stackrel{F_{\hz1}}{\Rightarrow} \chi(v_{0}, v_{1}, v_{2}, v_{4}, v_{8}) = \HM1 & \stackrel{F_{\hz8}}{\Rightarrow} \chi(v_{1}, v_{2}, v_{4}, v_{6}, v_{8}) = -1  \hspace*{28pt}\nonumber\\ 
 & \stackrel{F_{\hz1}}{\Rightarrow} \chi(v_{2}, v_{4}, v_{6}, v_{8}, v_{9}) = -1 , \label{eq:11_32_33_a_3}
 \end{eqnarray}
\begin{eqnarray}
\chi(v_{0}, v_{2}, v_{4}, v_{8}, v_{9}) = -1 & \stackrel{F_{\hz1}}{\Rightarrow} \chi(v_{0}, v_{2}, v_{4}, v_{7}, v_{8}) = \HM1 & \stackrel{F_{\hz8}}{\Rightarrow} \chi(v_{2}, v_{4}, v_{7}, v_{8}, v_{9}) = \HM1 , \hspace*{26pt}\label{eq:11_32_33_a_5}
 \end{eqnarray}
\begin{eqnarray}
\chi(v_{0}, v_{2}, v_{4}, v_{8}, v_{9}) = -1 & \stackrel{F_{\hz1}}{\Rightarrow} \chi(v_{0}, v_{2}, v_{4}, v_{5}, v_{8}) = \HM1 & \stackrel{F_{\hz4}}{\Rightarrow} \chi(v_{0}, v_{2}, v_{5}, v_{6}, v_{8}) = -1 \nonumber\\ 
 & \stackrel{F_{\hz1}}{\Rightarrow} \chi(v_{0}, v_{2}, v_{3}, v_{6}, v_{8}) = -1 & \stackrel{F_{10}}{\Rightarrow} \chi(v_{0}, v_{1}, v_{3}, v_{6}, v_{8}) = -1 \nonumber\\ 
 & \stackrel{F_{\hz3}}{\Rightarrow} \chi(v_{1}, v_{3}, v_{6}, v_{8}, v_{9}) = -1 & \stackrel{F_{10}}{\Rightarrow} \chi(v_{2}, v_{3}, v_{6}, v_{8}, v_{9}) = -1,  \hspace*{26pt}\label{eq:11_32_33_a_7}
 \end{eqnarray}
\begin{eqnarray} 
\chi(v_{0}, v_{2}, v_{4}, v_{6}, v_{9}) = -1 & \stackrel{F_{\hz1}}{\Rightarrow} \chi(v_{0}, v_{1}, v_{2}, v_{4}, v_{6}) = \HM1 & \stackrel{F_{\hz6}}{\Rightarrow} \chi(v_{0}, v_{1}, v_{4}, v_{6}, v_{8}) = \HM1  \hspace*{28pt}\nonumber\\ 
 & \stackrel{F_{\hz3}}{\Rightarrow} \chi(v_{1}, v_{4}, v_{6}, v_{8}, v_{9}) = \HM1 , \label{eq:11_32_33_a_8}
 \end{eqnarray}
\begin{eqnarray}
\chi(v_{0}, v_{2}, v_{4}, v_{5}, v_{8}) = \HM1 & \stackrel{F_{\hz4}}{\Rightarrow} \chi(v_{0}, v_{2}, v_{3}, v_{5}, v_{8}) = \HM1 & \stackrel{F_{10}}{\Rightarrow} \chi(v_{0}, v_{3}, v_{5}, v_{7}, v_{8}) = \HM1 \nonumber\\ 
 & \stackrel{F_{\hz4}}{\Rightarrow} \chi(v_{0}, v_{1}, v_{5}, v_{7}, v_{8}) = \HM1 & \stackrel{F_{\hz5}}{\Rightarrow} \chi(v_{1}, v_{5}, v_{7}, v_{8}, v_{9}) = \HM1 , \hspace*{26pt}\label{eq:11_32_33_a_17}
 \end{eqnarray}
\begin{eqnarray} 
\chi(v_{0}, v_{1}, v_{2}, v_{4}, v_{6}) \stackrel{(\ref{eq:11_32_33_a_8})}{=} \HM1 & \stackrel{F_{\hz6}}{\Rightarrow} \chi(v_{0}, v_{1}, v_{3}, v_{4}, v_{6}) = \HM1 & \stackrel{F_{\hz3}}{\Rightarrow} \chi(v_{1}, v_{3}, v_{4}, v_{6}, v_{9}) = \HM1 \nonumber\\
& \stackrel{F_{\hz2}}{\Rightarrow} \chi(v_{0}, v_{1}, v_{3}, v_{6}, v_{9}) = \HM1 & \stackrel{F_{10}}{\Rightarrow} \chi(v_{0}, v_{3}, v_{6}, v_{7}, v_{9}) = \HM1 \hspace*{30pt}\nonumber\\ 
 & \stackrel{F_{\hz2}}{\Rightarrow} \chi(v_{3}, v_{6}, v_{7}, v_{8}, v_{9}) = -1 , \label{eq:11_32_33_a_9}
 \end{eqnarray}
\begin{eqnarray}
\chi(v_{0}, v_{1}, v_{3}, v_{6}, v_{9}) \stackrel{(\ref{eq:11_32_33_a_9})}{=} \HM1 & \stackrel{F_{\hz6}}{\Rightarrow} \chi(v_{0}, v_{1}, v_{6}, v_{7}, v_{9}) = -1 & \stackrel{F_{\hz2}}{\Rightarrow} \chi(v_{1}, v_{6}, v_{7}, v_{8}, v_{9}) = \HM1 , \hspace*{26pt}\label{eq:11_32_33_a_10}
 \end{eqnarray}
\begin{eqnarray}
\chi(v_{0}, v_{2}, v_{3}, v_{6}, v_{8}) \stackrel{(\ref{eq:11_32_33_a_7})}{=} -1 & \stackrel{F_{10}}{\Rightarrow} \chi(v_{0}, v_{3}, v_{4}, v_{6}, v_{8}) = \HM1 & \stackrel{F_{\hz3}}{\Rightarrow} \chi(v_{3}, v_{4}, v_{6}, v_{8}, v_{9}) = \HM1 , \hspace*{26pt}\label{eq:11_32_33_a_11}
 \end{eqnarray}
\begin{eqnarray}
\chi(v_{0}, v_{2}, v_{4}, v_{7}, v_{8}) \stackrel{(\ref{eq:11_32_33_a_5})}{=} \HM1 & \stackrel{F_{\hz4}}{\Rightarrow} \chi(v_{0}, v_{1}, v_{2}, v_{7}, v_{8}) = -1 & \stackrel{F_{\hz8}}{\Rightarrow} \chi(v_{1}, v_{2}, v_{5}, v_{7}, v_{8}) = -1 \hspace*{28pt}\nonumber\\ 
 & \stackrel{F_{\hz4}}{\Rightarrow} \chi(v_{2}, v_{5}, v_{7}, v_{8}, v_{9}) = -1 , \label{eq:11_32_33_a_14}
 \end{eqnarray}
\begin{eqnarray} 
\chi(v_{0}, v_{1}, v_{2}, v_{7}, v_{8}) \stackrel{(\ref{eq:11_32_33_a_14})}{=} -1 & \stackrel{F_{\hz8}}{\Rightarrow} \chi(v_{1}, v_{2}, v_{3}, v_{7}, v_{8}) = -1 & \stackrel{F_{\hz5}}{\Rightarrow} \chi(v_{1}, v_{3}, v_{7}, v_{8}, v_{9}) = \HM1 , \hspace*{26pt}\label{eq:11_32_33_a_16}\\
\chi(v_{0}, v_{3}, v_{5}, v_{7}, v_{8}) \stackrel{(\ref{eq:11_32_33_a_17})}{=} \HM1 & \stackrel{F_{\hz5}}{\Rightarrow} \chi(v_{3}, v_{5}, v_{7}, v_{8}, v_{9}) = \HM1 , \label{eq:11_32_33_a_18}
\end{eqnarray}

With these values for the partial chirotope, we can find some new values of $\chi$ using the Grassmann--Plücker relations:
\begin{eqnarray}
\{\chi(v_{7}, v_{8}, v_{9}, v_{1}, v_{3})\chi(v_{7}, v_{8}, v_{9}, v_{5}, v_{6}),&\chi(v_{7}, v_{8}, v_{9}, v_{1}, v_{5})\chi(v_{7}, v_{8}, v_{9}, v_{3}, v_{6}),&\nonumber\\ 
&\chi(v_{7}, v_{8}, v_{9}, v_{1}, v_{6})\chi(v_{7}, v_{8}, v_{9}, v_{3}, v_{5})\}&\nonumber\\ \stackrel{(\ref{eq:11_32_33_a_16}),(\ref{eq:11_32_33_a_17}),(\ref{eq:11_32_33_a_9}),(\ref{eq:11_32_33_a_10}),(\ref{eq:11_32_33_a_18})}{=}\{1\cdot \chi(v_{7}, v_{8}, v_{9}, v_{5}, v_{6}),& -1\cdot (-1),&\nonumber\\ 
&1\cdot 1\},\nonumber\\ 
 &\Rightarrow \chi(v_{7}, v_{8}, v_{9}, v_{5}, v_{6}) = -1 ,& \label{eq:11_32_33_a_12}
 \end{eqnarray}
\begin{eqnarray}
\{\chi(v_{6}, v_{8}, v_{9}, v_{2}, v_{3})\chi(v_{6}, v_{8}, v_{9}, v_{5}, v_{7}),&\chi(v_{6}, v_{8}, v_{9}, v_{2}, v_{5})\chi(v_{6}, v_{8}, v_{9}, v_{3}, v_{7}),&\nonumber\\ 
&\chi(v_{6}, v_{8}, v_{9}, v_{2}, v_{7})\chi(v_{6}, v_{8}, v_{9}, v_{3}, v_{5})\}&\nonumber\\ \stackrel{(\ref{eq:11_32_33_a_7}),(\ref{eq:11_32_33_a_12}),(\ref{eq:11_32_33_a_9}),(\ref{eq:11_32_33_a_15})}{=}\{(-1)\cdot 1,& -\chi(v_{6}, v_{8}, v_{9}, v_{2}, v_{5})\cdot 1,&\nonumber\\ 
& 0\},\nonumber\\ 
 &\Rightarrow \chi(v_{6}, v_{8}, v_{9}, v_{2}, v_{5}) = -1 ,& \label{eq:11_32_33_a_13}
 \end{eqnarray}
\begin{eqnarray}
\{\chi(v_{6}, v_{8}, v_{9}, v_{1}, v_{3})\chi(v_{6}, v_{8}, v_{9}, v_{4}, v_{7}),&\chi(v_{6}, v_{8}, v_{9}, v_{1}, v_{4})\chi(v_{6}, v_{8}, v_{9}, v_{3}, v_{7}),&\nonumber\\ 
&\chi(v_{6}, v_{8}, v_{9}, v_{1}, v_{7})\chi(v_{6}, v_{8}, v_{9}, v_{3}, v_{4})\}&\nonumber\\ \stackrel{(\ref{eq:11_32_33_a_7}),(\ref{eq:11_32_33_a_8}),(\ref{eq:11_32_33_a_9}),(\ref{eq:11_32_33_a_10}),(\ref{eq:11_32_33_a_11})}{=}\{(-1)\cdot \chi(v_{6}, v_{8}, v_{9}, v_{4}, v_{7}),& -1\cdot (-1),&\nonumber\\ 
&(-1)\cdot 1\},\nonumber\\ 
 &\Rightarrow \chi(v_{6}, v_{8}, v_{9}, v_{4}, v_{7}) = -1 ,& \label{eq:11_32_33_a_2}
 \end{eqnarray}
\begin{eqnarray}
\{\chi(v_{6}, v_{8}, v_{9}, v_{3}, v_{4})\chi(v_{6}v_{8}, v_{9}, v_{5}, v_{7}),&\chi(v_{6}, v_{8}, v_{9}, v_{3}, v_{5})\chi(v_{6}, v_{8}, v_{9}, v_{4}, v_{7}),&\nonumber\\ 
&\chi(v_{6}, v_{8}, v_{9}, v_{3}, v_{7})\chi(v_{6}, v_{8}, v_{9}, v_{4}, v_{5})\}&\nonumber\\ \stackrel{(\ref{eq:11_32_33_a_11}),(\ref{eq:11_32_33_a_12}),(\ref{eq:11_32_33_a_15}),(\ref{eq:11_32_33_a_9})}{=}\{1\cdot 1,& 0,&\nonumber\\ 
&1\cdot \chi(v_{6}, v_{8}, v_{9}, v_{4}, v_{5})\},\nonumber\\ 
 &\Rightarrow \chi(v_{6}, v_{8}, v_{9}, v_{4}, v_{5}) = -1 ,& \label{eq:11_32_33_a_6}
 \end{eqnarray}
\begin{eqnarray}
\{\chi(v_{5}, v_{8}, v_{9}, v_{2}, v_{4})\chi(v_{5}, v_{8}, v_{9}, v_{6}, v_{7}),&\chi(v_{5}, v_{8}, v_{9}, v_{2}, v_{6})\chi(v_{5}, v_{8}, v_{9}, v_{4}, v_{7}),&\nonumber\\ 
&\chi(v_{5}, v_{8}, v_{9}, v_{2}, v_{7})\chi(v_{5}, v_{8}, v_{9}, v_{4}, v_{6})\}&\nonumber\\ \stackrel{(\ref{eq:11_32_33_a_1}),(\ref{eq:11_32_33_a_12}),(\ref{eq:11_32_33_a_13}),(\ref{eq:11_32_33_a_14}),(\ref{eq:11_32_33_a_6})}{=}\{(-1)\cdot (-1),& -1\cdot \chi(v_{5}, v_{8}, v_{9}, v_{4}, v_{7}),&\nonumber\\ 
&(-1)\cdot (-1)\},\nonumber\\ 
 &\Rightarrow \chi(v_{5}, v_{8}, v_{9}, v_{4}, v_{7}) = \HM1 ,& \label{eq:11_32_33_a_4}
 \end{eqnarray}
 
Finally, we get the Grassmann--Plücker relation
\begin{eqnarray}
\{\chi(v_{4}, v_{8}, v_{9}, v_{2}, v_{5})\chi(v_{4}, v_{8}, v_{9}, v_{6}, v_{7}),&\chi(v_{4}, v_{8}, v_{9}, v_{2}, v_{6})\chi(v_{4}, v_{8}, v_{9}, v_{5}, v_{7}),&\nonumber\\ 
&\chi(v_{4}, v_{8}, v_{9}, v_{2}, v_{7})\chi(v_{4}, v_{8}, v_{9}, v_{5}, v_{6})\}&\nonumber\\ \stackrel{(\ref{eq:11_32_33_a_1}),(\ref{eq:11_32_33_a_2}),(\ref{eq:11_32_33_a_3}),(\ref{eq:11_32_33_a_4}),(\ref{eq:11_32_33_a_5}),(\ref{eq:11_32_33_a_6})}{=}\{1\cdot 1,& -1\cdot (-1),&\nonumber\\ 
&(-1)\cdot (-1)\},\label{eq:11_32_33_a_0}
 \end{eqnarray}
 which is neither $\{0\}$, nor contains $\{-1,1\}$. Thus, the the Grassmann--Plücker relations cannot be satisfied, 
so the sphere $(10_{32,33}^0)$ does not support an oriented matroid. In particular, it is not polytopal.
\end{proof}


\begin{example}\label{ex:10_33_35_12}
	There is exactly one $3$-sphere with $f$-vector $(10,33,35,12)$.
	This sphere $(10_{33,35})$ is given by the facet list
			\smallskip
			
			\begin{minipage}{0.49\textwidth}
			$F_{\hz0} = \{v_{\hz1}, v_{\hz4}, v_{\hz7}, v_{\hz9}\}$\\ 
			$F_{\hz1} = \{v_{\hz2}, v_{\hz4}, v_{\hz7}, v_{\hz9}\}$\\ 
			$F_{\hz2} = \{v_{\hz0}, v_{\hz2}, v_{\hz4}, v_{\hz5}, v_{\hz8}\}$\\ 
			$F_{\hz3} = \{v_{\hz0}, v_{\hz2}, v_{\hz4}, v_{\hz6}, v_{\hz9}\}$\\ 
			$F_{\hz4} = \{v_{\hz1}, v_{\hz3}, v_{\hz6}, v_{\hz7}, v_{\hz8}\}$\\ 
			$F_{\hz5} = \{v_{\hz1}, v_{\hz3}, v_{\hz4}, v_{\hz6}, v_{\hz9}\}$
			\end{minipage}
			\begin{minipage}{0.49\textwidth}
			$F_{\hz6} = \{v_{\hz0}, v_{\hz2}, v_{\hz5}, v_{\hz7}, v_{\hz9}\}$\\ 
			$F_{\hz7} = \{v_{\hz1}, v_{\hz3}, v_{\hz5}, v_{\hz7}, v_{\hz9}\}$\\ 
			$F_{\hz8} = \{v_{\hz0}, v_{\hz1}, v_{\hz4}, v_{\hz6}, v_{\hz8}\}$\\ 
			$F_{\hz9} = \{v_{\hz1}, v_{\hz2}, v_{\hz4}, v_{\hz7}, v_{\hz8}\}$\\ 
			$F_{10} = \{v_{\hz2}, v_{\hz3}, v_{\hz5}, v_{\hz7}, v_{\hz8}\}$\\ 
			$F_{11} = \{v_{\hz0}, v_{\hz3}, v_{\hz5}, v_{\hz6}, v_{\hz8}, v_{\hz9}\}$
			\end{minipage}
			\smallskip
			
			It is not polytopal. 
			It has a diagram based on each of the facets $F_2$, $F_3$, $F_4$, $F_5$, $F_6$, $F_7$, $F_8$, and $F_{10}$, 
			but not based on one of $F_0$, $F_1$, $F_9$, or $F_{11}$. 
			A diagram based on facet $F_2$ is given in Figure~\ref{fig:diagram_10_33_35_12}.
			The sphere cannot be represented by a fan.
\end{example}			
 
\begin{figure}
\centering 
\begin{minipage}{0.3\textwidth}
\vspace*{6pt}
$\underline{F_2}$\\
$v_{\hz0}  =  (1306, 2451, 4264)$ \\ 
$v_{\hz1}  =  (2471, 990, 1976)$ \\ 
$v_{\hz2}  =  (2881, 3713, 856)$ \\ 
$v_{\hz3}  =  (1412, 2367, 1947)$ \\ 
$v_{\hz4}  =  (2812, 766, 2282)$ \\ 
$v_{\hz5}  =  (1451, 2517, 2110)$ \\ 
$v_{\hz6}  =  (1965, 1505, 2347)$ \\ 
$v_{\hz7}  =  (1772, 2235, 976)$ \\ 
$v_{\hz8}  =  (636, 941, 864)$ \\ 
$v_{\hz9}  =  (1612, 2283, 2145)$
\end{minipage}
\begin{minipage}{0.49\textwidth}
\includegraphics[scale=0.75]{./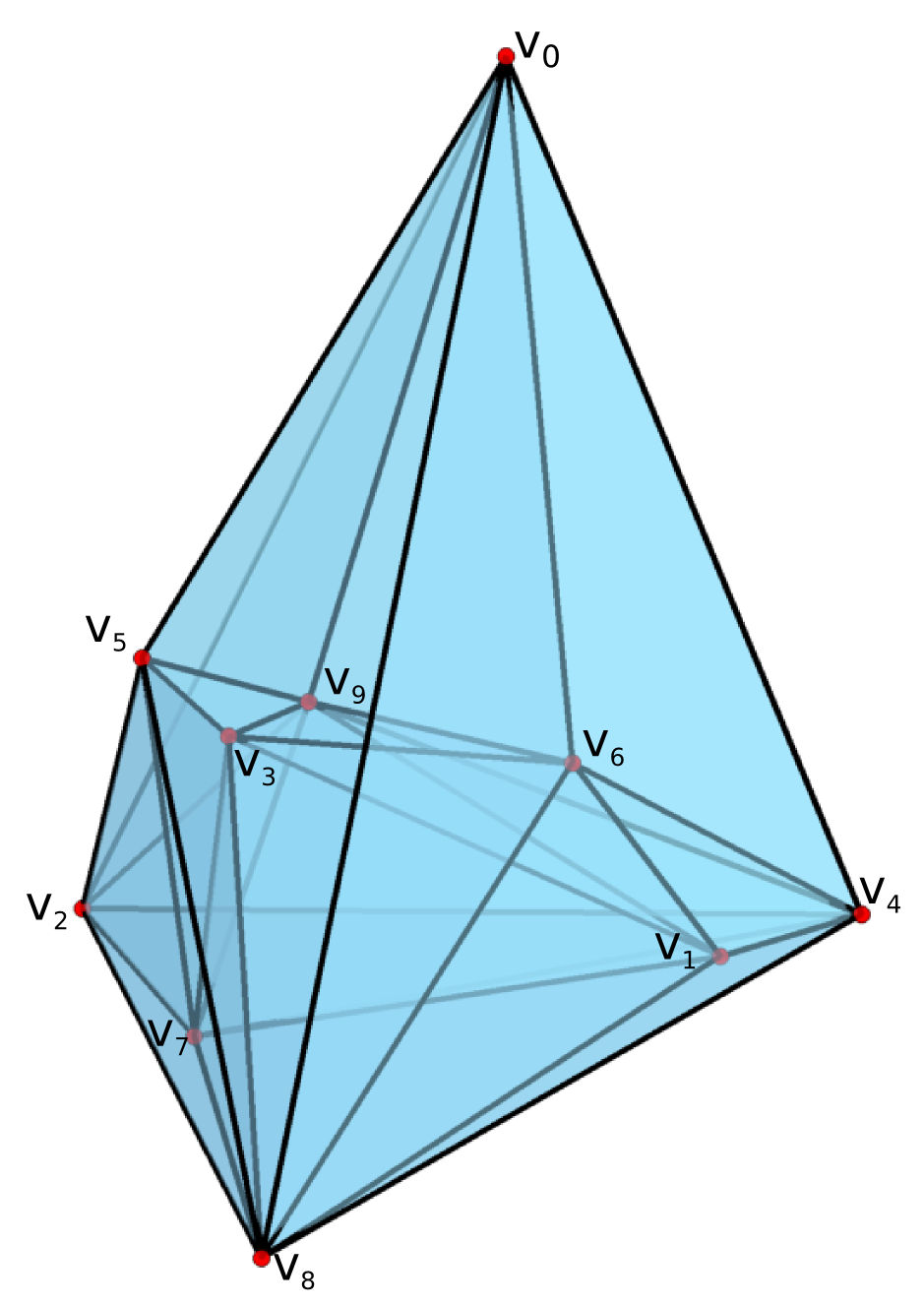}
\end{minipage}
\caption{A diagram based on facet $F_2$ for the sphere $(10_{33,35})$ with $f$-vector $(10,33,35,12)$.}
\label{fig:diagram_10_33_35_12} 
\end{figure}

\begin{examples}\label{ex:11_35_35_11}
There are exactly two $3$-spheres with $f$-vector $(11,35,35,11)$.
They are dual to each other.
These spheres $(11_{35}^0)$ and $(11_{35}^1)$ are given by facet lists
\smallskip
			
\begin{minipage}{0.49\textwidth}
$(11_{35}^0)$\\
$F_{\hz0} = \{v_{\hz1}, v_{\hz2}, v_{\hz4}, v_{\hz6}, v_{\hz9}\}$\\
$F_{\hz1} = \{v_{\hz3}, v_{\hz5}, v_{\hz7}, v_{\hz8}, v_{\hz9}\}$\\
$F_{\hz2} = \{v_{\hz0}, v_{\hz6}, v_{\hz7}, v_{\hz8}, v_{10}\}$\\
$F_{\hz3} = \{v_{\hz2}, v_{\hz3}, v_{\hz4}, v_{\hz8}, v_{\hz9}\}$\\
$F_{\hz4} = \{v_{\hz0}, v_{\hz1}, v_{\hz2}, v_{\hz5}, v_{\hz6}, v_{10}\}$\\
$F_{\hz5} = \{v_{\hz3}, v_{\hz4}, v_{\hz7}, v_{\hz8}, v_{10}\}$\\
$F_{\hz6} = \{v_{\hz1}, v_{\hz5}, v_{\hz6}, v_{\hz7}, v_{\hz9}\}$\\
$F_{\hz7} = \{v_{\hz0}, v_{\hz1}, v_{\hz2}, v_{\hz4}, v_{\hz8}, v_{10}\}$\\
$F_{\hz8} = \{v_{\hz3}, v_{\hz5}, v_{\hz6}, v_{\hz7}, v_{10}\}$\\
$F_{\hz9} = \{v_{\hz0}, v_{\hz2}, v_{\hz6}, v_{\hz7}, v_{\hz8}, v_{\hz9}\}$\\
$F_{10} = \{v_{\hz1}, v_{\hz3}, v_{\hz4}, v_{\hz5}, v_{\hz9}, v_{10}\}$
\end{minipage}
\begin{minipage}{0.49\textwidth}
$(11_{35}^1)$\\
$F_{\hz0}  =  \{v_{\hz2}, v_{\hz4}, v_{\hz7}, v_{\hz9}\}$\\ 
$F_{\hz1}  =  \{v_{\hz0}, v_{\hz4}, v_{\hz6}, v_{\hz7}, v_{10}\}$\\ 
$F_{\hz2}  =  \{v_{\hz0}, v_{\hz3}, v_{\hz4}, v_{\hz7}, v_{\hz9}\}$\\ 
$F_{\hz3}  =  \{v_{\hz1}, v_{\hz3}, v_{\hz5}, v_{\hz8}, v_{10}\}$\\ 
$F_{\hz4}  =  \{v_{\hz0}, v_{\hz3}, v_{\hz5}, v_{\hz7}, v_{10}\}$\\ 
$F_{\hz5}  =  \{v_{\hz1}, v_{\hz4}, v_{\hz6}, v_{\hz8}, v_{10}\}$\\ 
$F_{\hz6}  =  \{v_{\hz0}, v_{\hz2}, v_{\hz4}, v_{\hz6}, v_{\hz8}, v_{\hz9}\}$\\ 
$F_{\hz7}  =  \{v_{\hz1}, v_{\hz2}, v_{\hz5}, v_{\hz6}, v_{\hz8}, v_{\hz9}\}$\\ 
$F_{\hz8}  =  \{v_{\hz1}, v_{\hz2}, v_{\hz3}, v_{\hz5}, v_{\hz7}, v_{\hz9}\}$\\ 
$F_{\hz9}  =  \{v_{\hz0}, v_{\hz1}, v_{\hz3}, v_{\hz6}, v_{\hz9}, v_{10}\}$\\ 
$F_{10}  =  \{v_{\hz2}, v_{\hz4}, v_{\hz5}, v_{\hz7}, v_{\hz8}, v_{10}\}$
\end{minipage}
\smallskip
			
Both spheres are not fan-like, hence they have no star-shaped embedding. 
Furthermore, the sphere $(11_{35}^0)$ does not have a diagram with base $F_6$, $F_9$, or $F_{10}$;
the sphere $(11_{35}^1)$ has a diagram based on each of $F_4$ and $F_6$, 
but does not have a diagram with base $F_0$, $F_1$, $F_3$, $F_5$, $F_9$, or $F_{10}$.
A diagram for $(11_{35}^1)$ with base $F_6$ is given in Figure~\ref{fig:diagram_11_35_35_11_1}.
\end{examples}			

\begin{figure}[ht]
\centering 
\begin{minipage}{0.48\textwidth}
\vspace*{6pt}
$\underline{F_6}$\\
$v_{\hz0}  =  (0, 0, 0)$ \\ 
$v_{\hz1}  =  (1797, 1585, 512)$ \\ 
$v_{\hz2}  =  (2009, 2395, 1622)$ \\ 
$v_{\hz3}  =  (460, 1113, 648)$ \\ 
$v_{\hz4}  =  (0, 0, 1000)$ \\ 
$v_{\hz5}  =  (8565805/4137, 2055, 1316)$ \\ 
$v_{\hz6}  =  (2850, 426, 139)$ \\ 
$v_{\hz7}  =  (521, 1238, 853)$ \\ 
$v_{\hz8}  =  (2946124555/1064794, 1020, 770)$ \\ 
$v_{\hz9}  =  (423, 2580, 139)$ \\ 
$v_{10}  =  (1161, 1055, 677)$
\end{minipage}
\begin{minipage}{0.48\textwidth}
\includegraphics[scale=.8]{./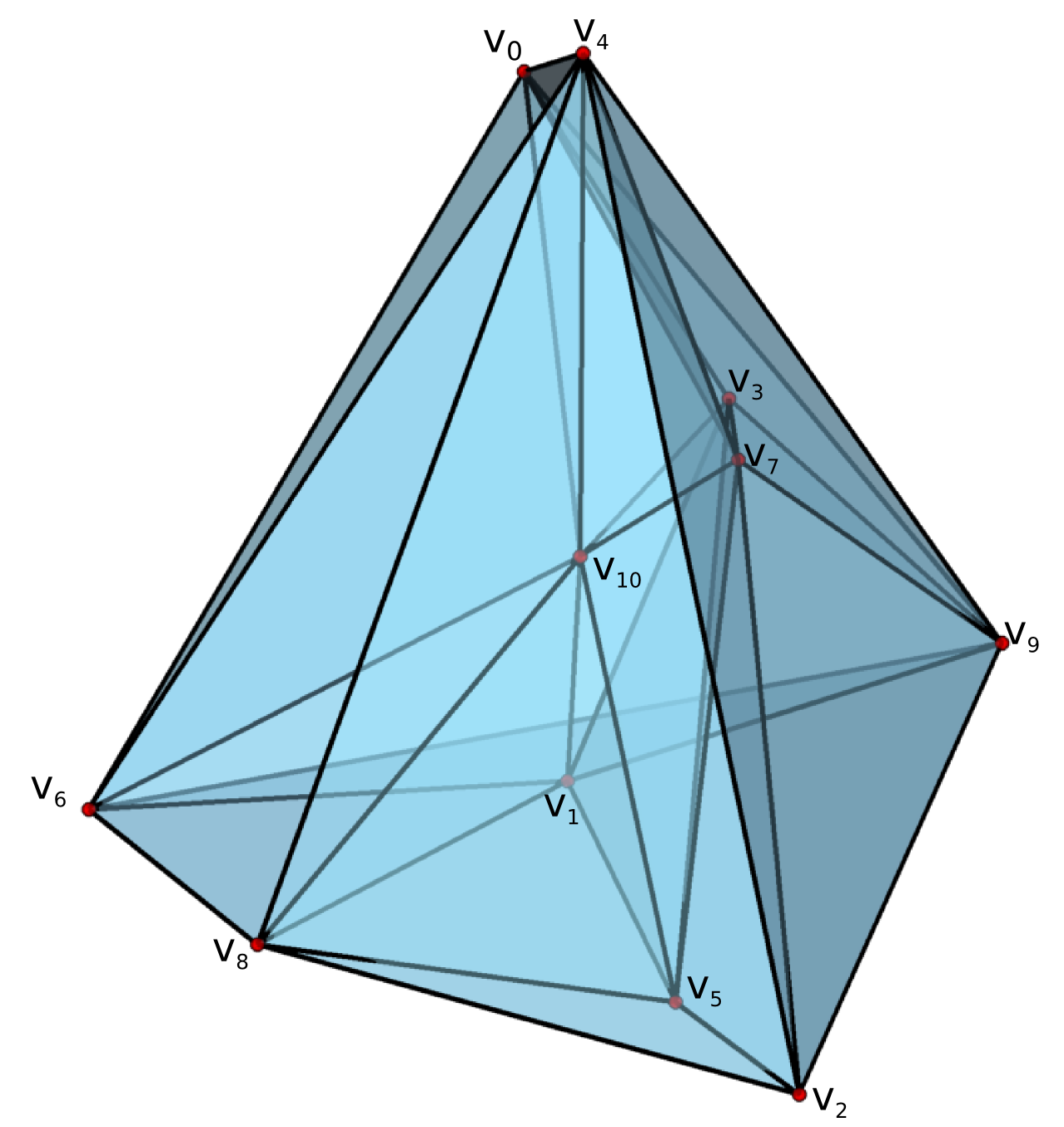}
\end{minipage}
\caption{A diagram based on facet $F_6$ for the sphere $(11_{35}^1)$ with $f$-vector $(11,35,35,11)$.} 
\label{fig:diagram_11_35_35_11_1}
\end{figure}

Similar details can be found in Brinkmann \cite[Sect.~3.2.4]{B_thesis} for the four self-dual $3$-spheres of Theorem~\ref{thm:12_vert}.

\section*{Acknowledgements}
We are very grateful to Marge Bayer and Moritz Firsching valuable comments and discussions.
The Algorithm in Section~\ref{sec:algorithm} was developed in very productive
exchanges with Katy Beeler, Hannah Sch{\"a}fer Sj{\"o}berg, and Moritz Schmitt.

\begin{small}

\begin{thebibliography}{10}

\bibitem{Spheres8v1}
{\sc A.~{Altshuler} and L.~{Steinberg}}, {\em Enumeration of the
  quasisimplicial $3$-spheres and $4$-polytopes with eight vertices}, Pacific
  Journal of Mathematics, 113 (1984), pp.~269--288.

\bibitem{Spheres8v2}
\leavevmode\vrule height 2pt depth -1.6pt width 23pt, {\em The complete
  enumeration of the $4$-polytopes and $3$-spheres with eight vertices},
  Pacific Journal of Mathematics, 117 (1985), pp.~1--16.

\bibitem{Bar}
{\sc D.~W. Barnette}, {\em The minimum number of vertices of a simple
  polytope}, Israel J. Math., 10 (1971), pp.~121--125.

\bibitem{barnette72:_inequal}
\leavevmode\vrule height 2pt depth -1.6pt width 23pt, {\em Inequalities for
  $f$-vectors of $4$-polytopes}, Israel J. Math., 11 (1972), pp.~284--291.

\bibitem{Bar1}
\leavevmode\vrule height 2pt depth -1.6pt width 23pt, {\em A proof of the lower
  bound conjecture for convex polytopes}, Pacific J. Math., 46 (1973),
  pp.~349--354.

\bibitem{barnette74:_e_s}
\leavevmode\vrule height 2pt depth -1.6pt width 23pt, {\em The projection of
  the $f$-vectors of $4$-polytopes onto the {$(E,S)$}-plane}, Discrete Math.,
  10 (1974), pp.~201--216.

\bibitem{barnette73:_projec}
{\sc D.~W. Barnette and J.~R. Reay}, {\em Projections of $f$-vectors of
  four-polytopes}, J. Combinatorial Theory, Ser.~A, 15 (1973), pp.~200--209.

\bibitem{Bay}
{\sc M.~M. Bayer}, {\em The extended $f$-vectors of $4$-polytopes}, J.
  Combinatorial Theory, Ser. A, 44 (1987), pp.~141--151.

\bibitem{BaLee}
{\sc M.~M. Bayer and C.~W. Lee}, {\em Combinatorial aspects of convex
  polytopes}, in Handbook of Convex Geometry, P.~Gruber and J.~Wills, eds.,
  North-Holland, Amsterdam, 1993, pp.~485--534.

\bibitem{B_thesis}
{\sc P.~{Brinkmann}}, {\em \emph{f}-Vector Spaces of Polytopes, Spheres, and
  Eulerian Lattices}, PhD thesis, {Freie Universit{\"a}t Berlin, Germany},
  2016.

\bibitem{BZ_flagvectors}
{\sc P.~{Brinkmann} and G.~M. {Ziegler}}, {\em A flag vector of a $3$-sphere that
  is not the flag vector of a $4$-polytope}, Mathematika,  
\newblock to appear.

\bibitem{Brueckner}
{\sc J.~M. {Br\"uckner}}, {\em {{\"U}ber die {A}bleitung der allgemeinen
  {P}olytope und die nach {I}somorphismus verschiedenen {T}ypen der allgemeinen
  {A}chtzelle}}, {Verhand. Konink. Akad. Wetenschap, Erste Sectie}, 10 (1910).

\bibitem{CookeFinney}
{\sc G.~E. Cooke and R.~L. Finney}, {\em Homology of Cell Complexes},
  Mathematical Notes, Princeton University Press, Princeton, NJ, 1967.

\bibitem{Z80}
{\sc D.~Eppstein, G.~Kuperberg, and G.~M. Ziegler}, {\em Fat $4$-polytopes and
  fatter $3$-spheres}, in Discrete Geometry: {I}n honor of {W. Kuperberg}'s
  60th birthday, A.~Bezdek, ed., vol.~253 of Pure and Applied Mathematics,
  Marcel Dekker Inc., New York, 2003, pp.~239--265.

\bibitem{Steinitz}
{\sc {Ernst Steinitz}}, {\em {{\"U}ber die {E}ulerschen {P}olyederrelationen}},
  {Archiv f\"ur Mathematik und Physik}, 11 (1906), pp.~{86--88}.

\bibitem{Ewal}
{\sc G.~Ewald}, {\em Combinatorial Convexity and Algebraic Geometry}, vol.~168
  of Graduate Texts in Mathematics, Springer-Verlag, New York, 1996.

\bibitem{Gruenbaum}
{\sc B.~Gr{\"u}nbaum}, {\em Convex {P}olytopes}, vol.~221 of Graduate Texts in
  Math., Springer-Verlag, New York, 2003.
\newblock Second edition prepared by V. Kaibel, V. Klee and G. M. Ziegler
  (original edition: Interscience, London 1967).

\bibitem{Polytopes8vs}
{\sc B.~{Gr{\"u}nbaum} and V.~P. {Sreedharan}}, {\em An enumeration of
  simplicial $4$-polytopes with $8$ vertices}, Journal of Combinatorial Theory,
  2 (1967), pp.~437--467.

\bibitem{Hoeppner-dipl}
{\sc A.~H\"oppner}, {\em {$f$-Vectoren und Fahnenvektoren von $4$-dimensionalen
  Polytopen}}, {D}iplom\-arbeit, TU Berlin, 1998.
\newblock (in German), 78 pages.

\bibitem{HoeppnerZie}
{\sc A.~{H{\"o}ppner} and G.~M. {Ziegler}}, {\em A census of flag-vectors of
  $4$-polytopes}, in {Polytopes --- Combinatorics and Computation}, G.~Kalai
  and G.~M. Ziegler, eds., vol.~29 of {DMV Seminars}, Birkh{\"a}user, Basel,
  2000, pp.~105--110.

\bibitem{kalai87:_rigid_i}
{\sc G.~Kalai}, {\em Rigidity and the lower bound theorem, {I}}, Inventiones
  Math., 88 (1987), pp.~125--151.

\bibitem{Klee3}
{\sc V.~Klee}, {\em A combinatorial analogue of {P}oincar\'e's duality
  theorem}, Canadian J. Math., 16 (1964), pp.~517--531.

\bibitem{ling:flag-vectors}
{\sc J.~M. Ling}, {\em New non-linear inequalites for flag-vectors of
  $4$-polytopes}, Discrete Comput. Geometry, 37 (2007), pp.~455--469.

\bibitem{bistellar}
{\sc F.~H. Lutz}, {\em {GAP-program BISTELLAR. Second version (first version
  1997 with A. Bj{\"o}rner)}}.
\newblock
  \href{http://page.math.tu-berlin.de/~lutz/stellar/BISTELLAR}{\url{page.math.tu-berlin.de/~lutz/}},
  1999.
\newblock Software.

\bibitem{Lutz-diss}
\leavevmode\vrule height 2pt depth -1.6pt width 23pt, {\em Triangulated
  Manifolds with Few Vertices and Vertex-Transitive Group Actions}, PhD thesis,
  TU Berlin, Berlin, 1999.
\newblock 134~pages; published at Shaker-Verlag, Aachen, 1999.

\bibitem{nauty}
{\sc B.~T. {McKay} and A.~{Piperno}}, {\em {Practical graph isomorphism, II}},
  Journal of Symbolic Computation, 60 (2014), pp.~94--112.

\bibitem{McM1}
{\sc P.~McMullen}, {\em The numbers of faces of simplicial polytopes}, Israel
  J. Math., 9 (1971), pp.~559--570.

\bibitem{Miyata-diss}
{\sc H.~Miyata}, {\em Studies on Classifications and Constructions of
  Combinatorial Structures Related to Oriented Matroids}, PhD thesis,
  University of Tokyo, 2011.
\newblock xii+123 pages.

\bibitem{Munkres:AT}
{\sc J.~R. Munkres}, {\em Elements of Algebraic Topology}, Addison-Wesley,
  Menlo Park, CA, 1984.

\bibitem{PaffenholzWerner:many}
{\sc A.~Paffenholz and A.~Werner}, {\em Constructions for $4$-polytopes and the
  cone of flag vectors}, in Algebraic and Geometric Combinatorics (Proc.\
  Anogia, August 2005), C.~A. Athanasiadis, V.~V. Batyrev, D.~I. Dais, M.~Henk,
  and F.~Santos, eds., vol.~423 of Contemporary Math., Amer. Math. Soc.,
  Providence, RI, 2006, pp.~283--303.

\bibitem{sage}
{\sc {Sage community}}, {\em {Sage 6.2, Sage Mathematical Software System}}.
\newblock \href{http://www.sagemath.org/}{\url{http://www.sagemath.org/}}.
\newblock Software.

\bibitem{Stanley-ec1-2}
{\sc R.~P. Stanley}, {\em Enumerative Combinatorics, Vol.~I (Second edition)},
  vol.~49 of Cambridge Studies in Advanced Mathematics, Cambridge University
  Press, Cambridge, 2012.
\newblock Original edition: Wadsworth, 1987.

\bibitem{SteinitzThm}
{\sc E.~{Steinitz}}, {\em {Polyeder und Raumeinteilungen}}, in
  {Encyklop{\"a}die der mathematischen Wissenschaften mit Einschluss ihrer
  Anwendungen, Band III.1.2}, W.~F. {Meyer} and H.~{Mohrmann}, eds., vol.~9,
  Teubner, Leipzig, 1922, ch.~AB12, pp.~1--139.

\bibitem{SteinitzVorlesung}
{\sc E.~{Steinitz} and H.~{Rademacher}}, {\em {Vorlesungen {\"u}ber die Theorie
  der Polyeder}}, Springer-Verlag, Berlin, 1934.
\newblock Reprint, Springer-Verlag 1976.

\bibitem{WernerThesis}
{\sc A.~{Werner}}, {\em Linear constraints on Face numbers of Polytopes}, PhD
  thesis, {Technische Universit{\"a}t Berlin, Germany}, 2009.
\newblock Published at
  \href{https://opus4.kobv.de/opus4-tuberlin/frontdoor/index/index/docId/2157}{\url{https://opus4.kobv.de/}}.

\bibitem{Z35}
{\sc G.~M. Ziegler}, {\em Lectures on {P}olytopes}, vol.~152 of Graduate Texts
  in Mathematics, Springer-Verlag, New York, 1995.
\newblock Revised edition, 1998; seventh updated printing 2007.

\bibitem{Z82}
\leavevmode\vrule height 2pt depth -1.6pt width 23pt, {\em Face numbers of
  $4$-polytopes and $3$-spheres}, in Proceedings of the International Congress
  of Mathematicians (ICM 2002, Beijing), L.~Tatsien, ed., vol.~III, Beijing,
  China, 2002, Higher Education Press, pp.~625--634.

\end{thebibliography}

\end{small}

\end{document}